\newtheorem{theorem}{Theorem}[section]
\newtheorem{lemma}[theorem]{Lemma}
\newtheorem{proposition}[theorem]{Proposition}
\newtheorem{corollary}[theorem]{Corollary}
\newtheorem{definition}[theorem]{Definition}
\newtheorem{remark}[theorem]{Remark}
\newtheorem{notation}[theorem]{Notation}
\newcommand{\Gal}{\operatorname{Gal}}
\newcommand{\ds}{\displaystyle}
\newcommand{\Tr}{\operatorname{Tr}}
\newcommand\rank{\operatorname{rank}}
\newcommand\Qp{{\mathbb Q_p}}
\newcommand\Zp{{\mathbb Z_p}}
\newcommand\Z{\mathbb Z}
\newcommand\mfp{\mathfrak p}
\newcommand\barmfp{\bar{\mfp}}
\newcommand\Q{\mathbb Q}
\newcommand\C{\mathbb C}
\newcommand\OO{\mathcal O}
\newenvironment{nouppercase}{%
  \renewcommand{\uppercasenonmath}[1]{}}{}
\DeclareSymbolFont{cyrletters}{OT2}{wncyr}{m}{n}
\DeclareMathSymbol{\Sha}{\mathalpha}{cyrletters}{"58}
\newcommand\Label{\label}
\title[Euler Systems of $X_1(N)$]{Construction of Anti-Cyclotomic Euler Systems of Abelian Varieties Associated to $X_1(N)$}
\author{Daeyeol Jeon}
\address{Department of Mathematics Education, Kongju National University,
Gongju 314-701, South Korea}
\email{dyjeon@kongju.ac.kr}
\author{Byoung Du Kim}
\address{School of Mathematics and Statistics, Victoria University of Wellington, Wellington 6140, New Zealand}
\email{byoungdu.kim@vuw.ac.nz}
\author{Chang Heon Kim}
\address{Department of Mathematics, Sungkyunkwan University, Suwon 440-746,
South Korea}
\email{chhkim@skku.edu}
\subjclass[2010]{Primary: 11G, Secondary: 14G05, 14G35}
\keywords{Euler systems, rational points of abelian varieties associated to modular forms}
\begin{document}

\begin{nouppercase}
\maketitle
\end{nouppercase}

\begin{abstract}
Let $K$ be an imaginary quadratic field, $N$ be a positive integer, $f(z)$ be a newform of level $\Gamma_1(N)$, and $A_f$ be the abelian variety associated to $f$. For each $\tau \in K$ ($\operatorname{Im} \tau >0$), we construct a certain point $P_\tau$ on $A_f$ defined over an extended ring class field of $K$ of level $N$. Our construction generalizes Birch's construction of the Heegner points to the abelian varieties associated to modular forms of level $\Gamma_1(N)$ and nontrivial character. Then, we show that $P_\tau$'s satisfy the distribution and congruence relations of an Euler system, which implies that it should be possible to apply the Euler system techniques to them to show a relation between the non-torsionness of $P_\tau$ and the rank of $A_f(K)$.

\end{abstract}

\tableofcontents

\section{Introduction}		\label{Introduction}

In this paper, we present the construction of certain points on the modular curve $X_1(N)$ (and by extension, on the abelian varieties over $\Q$ given by irreducible quotients of $J_1(N)$). We will argue that our points generalize Birch's Heegner points (\cite{Birch-1}) (which are defined on $X_0(N)$) in the sense that (like Birch's construction) modular functions and class field theory play an integral role in the construction, and show that they satisfy the conditions (the distribution and congruence relations) of an Euler system.

First, we give a brief description of Birch's construction of the Heegner points. As is customary, we let $\mathbb H$ denote the upper-half plane, let $Y_0(N)$ (resp. $Y_1(N)$) denote $\Gamma_0(N) \backslash \mathbb H$ (resp. $\Gamma_1(N) \backslash \mathbb H$), and $X_0(N)$ (resp. $X_1(N)$) denote its compactification by the addition of cusps. Let $j(\tau)$ ($\tau \in \mathbb H$) denote the modular elliptic function given by the $j$-invariant of $\Lambda_\tau=(1, \tau)=\Z\cdot 1 + \Z \cdot \tau$. It is well-known that $(j(\tau),j(N\tau))$ satisfies a certain polynomial equation $P_N(X,Y)=0$, which gives an affine model over $\Q$ of $Y_0(N)$. If $K$ is an imaginary quadratic field, and $\tau \in K \cap \mathbb H$, then $j(\tau)$ generates a certain ring class field extension of $K$ by class field theory. Birch noted that for a carefully chosen $\tau$, $(j(\tau), j(N\tau))$ is a point on the affine model $Y_0(N)_{/\Q}$ over the ring class field (thus can be considered as a point on $X_0(N)_{/\Q}$), which he called a Heegner point (\cite{Birch-1}). A Heegner point can be also considered as a point on an elliptic curve $E$ over $\Q$ of conductor $N$, as we will explain shortly.

On the other hand, the authors wanted to find an explicit way to construct a point on $X_1(N)$ also defined over a certain ring class field extension of $K$. Noting the role played by $j(\tau)$ in Birch's construction, we looked for modular functions that can play a similar role.

In \cite{Baaziz}, Baaziz constructed modular functions $b(\tau), c(\tau)$ of level $\Gamma_1(N)$ (Section~\ref{sec:Preliminaries}), which are rational functions of the Weierstrass functions $\wp(\cdot; \Lambda_\tau), \wp'(\cdot; \Lambda_\tau)$, and generate the function field of $X_1(N)$. Jeon, Kim, and Lee noted (\cite{Jeon-Kim-Lee}) that $(b(\tau), c(\tau))$ gives an affine model (over $\Q$) of $Y_1(N)$. The modular functions $b(\tau), c(\tau)$ seemed ideal for our purpose.

Our first goal is to define a point analogous to the Heegner points: We define $P_{\tau}=(b(\tau), c(\tau)) \in Y_1(N) (\subset X_1(N))$ for any $\tau \in K$, $\operatorname{Im}\tau>0$.

Secondly, we find a number field over which $P_\tau$ is defined. For an order $\OO$ of $K$, let $L_{\OO,N}$ be the {\it extended ring class field of level $N$ associated to $\OO$} (see Section~\ref{Field}). We show that if $\OO$ acts on $\Lambda_\tau$, then $P_{\tau} \in X_1(N)(L_{\OO,N})$ (Corollary~\ref{Carp}).

Thirdly, we show that $P_\tau$ satisfies the distribution and congruence relations of Euler systems.

Let $f(z)=\sum_{n=1}^{\infty} a_n(f) q^n$ ($a_1=1$) be a newform of level $\Gamma_1(N)$ with character $\epsilon$ (modulo $N$), $A_f$ be the abelian variety given by the quotient of $J_1(N)$ divided by the ideal of the Hecke algebra generated by $T_l-a_l(f)$ ($l\nmid N$), $U_l-a_l(f)$ ($l|N$), $\langle l \rangle-\epsilon(l)$ ($(l,N)=1$) where $l$ runs over all primes, and $\mu_f: X_1(N)\to J_1(N) \to A_f$ be a modular parametrization (where the map from $X_1(N)$ to $J_1(N)$ is given by $P \mapsto (P)-(\infty)$). We let $P_{\tau}$ also denote $\mu_f(P_{\tau}) \in A_f$ by abuse of notation.

Suppose $K=\Q(\sqrt D)$ for some square-free negative integer $D$. Fix $\tau_K=\sqrt D$ if $D \not\equiv 1 \pmod 4$, and $\ds \frac{\sqrt D+1}2$ if $D \equiv 1 \pmod 4$. For each positive integer $c$ prime to $N$, let

\[ \tau'=\frac{a+\tau_K}c \]
for an integer $a \in \Z$. Then, as mentioned above, $P_{\tau'}$ is defined over $L_{\OO_c,N}$ where $\OO_c=\Z+c\OO_K$.

Suppose $p$ is a prime number prime to $N \cdot \operatorname{disc}(K/\Q)$, and let $a_p(f)$ be the $p$-th Fourier coefficient of the $q$-expansion of $f$.

If $(p, c)=1$, $p \equiv 1 \pmod N$, and $p$ is inert over $K/\Q$, then

\begin{eqnarray}		\label{JKK-1} 
\Tr_{ L_{\OO_{cp}, N} / L_{\OO_c, N} } P_{\tau'/p} = a_p(f) P_{\tau'}. 
\end{eqnarray}
(See Theorem~\ref{Spain}.)

On the other hand, if $p|c$, then

\begin{eqnarray}		\label{JKK-2}
\Tr_{ L_{\OO_{cp},N}/ L_{\OO_c, N} } P_{\tau'/p} = a_p(f) P_{\tau'}-\epsilon(p)P_{p\tau'}. 
\end{eqnarray}
(See Theorem~\ref{Spanish March}.)

Now, suppose $p$ is a prime that is inert over $K/\Q$ and $p \equiv 1 \pmod N$, and $(p,c)=1$. Let $\lambda$ be any prime of $L_{\OO_c, N}$ lying above $p$, $\lambda'$ be any prime of $L_{\OO_{cp}, N}$ lying above $\lambda$, and $\operatorname{red}_{\lambda}$ and $\operatorname{red}_{\lambda'}$ be reduction maps onto the special fiber of the N\'eron model (over $\Zp$) of $A_f$. Then, we have

\begin{align}		
\Label{JKK-3} (p+1)\operatorname{red}_{\lambda'} P_{\tau'/p}	&=& (\operatorname{Frob}_p + p\cdot \epsilon(p) \cdot \operatorname{Frob}_p^{-1}) \operatorname{red}_{\lambda} P_{\tau'}	\\
\nonumber &=&a_p(f) \cdot \operatorname{red}_{\lambda} P_{\tau'}
\end{align}
(see Theorem~\ref{Congruence}).

Now, let's compare them with the conditions of Kolyvagin's Euler system of the Heegner points (\cite{Kolyvagin-3}~Sections~1, 3). Let $C_\tau$ denote Birch's Heegner point $(j(\tau), j(N\tau)) \in X_0(N)$. Suppose $f(z)$ is a newform of level $\Gamma_0(N)$ and $A_f$ is the abelian variety associated to $f(z)$ as in the case of $X_1(N)$ (the most prominent case being an elliptic curve over $\Q$). Again, we fix a modular parametrization map $\mu_f: X_0(N) \to A_f$ defined over $\Q$ which satisfies $\mu_f(\infty)=0$. By abuse of notation, we let $C_\tau$ denote $\mu_f(C_\tau) \in A_f$ as well. By Kolyvagin (\cite{Kolyvagin-1}, \cite{Kolyvagin-2}), for each $n \in \Z (n>0$) we can choose an appropriate $\tau_n \in K \cap \mathbb H$ so that $C_{\tau_n}$ is defined over the ring class field $L_{\OO_n}$ of the order $\OO_n=\Z+n\OO_K$, and for a prime $l$ with $(l,N)=1$, if $l\nmid n$ and $l$ is inert over $K/\Q$, we have the distribution relation

\begin{eqnarray}	\label{Kolyvagin-1}
 \Tr_{L_{\OO_{nl}}/L_{\OO_n}} C_{\tau_{nl}}=a_l(f) C_{\tau_n} 
\end{eqnarray}
(\cite{Kolyvagin-1}~Proposition~1). Although, it does not appear in Kolyvagin's work, we also have that if $l | n$,

\begin{eqnarray}	\label{Kolyvagin-2}
\Tr_{L_{\OO_{nl}}/L_{\OO_n}} C_{\tau_{nl}}=a_l(f) C_{\tau_n} - C_{\tau_{n/l}} 
\end{eqnarray}
(see the proof of \cite{Rubin-4}~Proposition~6.1 although the readers should note that Rubin assumes $a_l(f)=0$). 

Also, where $l$ does not divide $\operatorname{disc}(K/\Q)$, $v$ is any prime of $K(1)$ above $l$, and $w$ is any prime of $L_{\OO_l}$ above $v$, we have the congruence relation

\begin{eqnarray}	\Label{Kolyvagin-3}
\operatorname{red}_w (C_{\tau_l})= \operatorname{Frob}_l (\operatorname{red}_v(C_{\tau_1}))
\end{eqnarray}
(see \cite{Kolyvagin-1}~Proposition~6, and \cite{Kolyvagin-2}~Proposition~1).

(\ref{JKK-1}) is clearly analogous to (\ref{Kolyvagin-1}) with the extra condition $p\equiv 1 \pmod N$, which we believe will not make much difference in practice. (\ref{JKK-2}) is also clearly analogous to (\ref{Kolyvagin-2}). The appearance of $\epsilon(p)$ can be easily explained by the fact that modular forms of level $\Gamma_0(N)$ have a trivial character. (\ref{JKK-2}) should be what Rubin calls the distribution relation in the $p$-direction (\cite{Rubin-3}~Remark~2.1.5), and (as Rubin points out) we believe that it can replace the congruence relations in the Euler system techniques. Also it should be noted that (\ref{JKK-2}) indicates a natural connection with Iwasawa Theory.

The main goal of an Euler system is to obtain a sharp bound for the ranks of $A_f$. For example, Kolyvagin showed that if $f$ is a newform of level $\Gamma_0(N)$ and $N_{K(1)/K}C_1 (\in A_f(K))$ is not torsion, then $\rank A_f(K)=1$. We believe that we can apply the techniques of Euler systems to $\{ P_{(a+\tau)/c} \}$, and obtain a similar result for $A_f(K)$ where $f$ is a newform of level $\Gamma_1(N)$ (and a more general result in the direction of Iwasawa Theory), and we are hopeful that such a result will be in our subsequent publication.

\begin{remark}
There are also Kato's Euler systems (\cite{Kato}) defined on $J(N)$. We note that his Euler systems are ``the Euler systems over the cyclotomic fields'' whereas our Euler system (as well as the Euler system of the Heegner points) are ``the Euler systems over anti-cyclotomic fields.'' They are different in the definition, construction, and application.
\end{remark}

%%%%%%%%%%%%%%%%%%%%%%%
%%%%%%%%%%%%%%%%%%%%%%%

\section{Preliminaries}\label{sec:Preliminaries}

\subsection{Modular functions $b(\tau)$ and $c(\tau)$}
\hfill

Let $\Gamma=SL_2(\Z)$ be the full modular group, and for any $N\geq 1$, $\Gamma(N)$, $\Gamma_1(N)$, and $\Gamma_0(N)$ be the standard congruence groups.
Let $Y_1 (N)_{/\Q}$ be the affine curve over $\Q$ of the moduli schemes of the isomorphism classes of elliptic curves $E$ with an $N$-torsion point. As well-known, $Y_1(N)_{\C}$ is (isomorphic to) $(Y_1 (N)_{/\Q} \otimes \C)^{an}$.

More explicitly, this isomorphism is given by the following: Let $\Lambda_\tau=(\tau,1)$ be the lattice in $\C$ with basis $\tau$ and 1. Then, the above-mentioned isomorphism (of analytic curves between $Y_1(N)_{\C}$ and $(Y_1 (N)_{/\Q} \otimes \C)^{an}$) is given by

\[ \tau \mapsto \left( 	\C/\Lambda_\tau,\frac{1}{N}+\Lambda_\tau		\right).	\]

%The $X$'s are compact Riemann surfaces. Denote the genera of $X_1(N), $X_0(N)$ by $g_1(N), g_0(N)$ respectively.

%Let $\HH$ be the complex upper half plane, and let
%$$\Gamma_1(N):=\left\{\begin{pmatrix} a&b\\c&d\end{pmatrix}\in \Gamma(1):=SL_2(\Z)\,|\, a\equiv d\equiv 1\,\,({\rm mod}\,\, N),\, c\equiv 0\,\,({\rm mod}\,\, N)\right\}.$$
%Then $\Gamma_1(N)$ acts on $\HH$ by linear fractional transformations, and the quotient space $Y_1(N)=\Gamma_1(N)\backslash\HH$ parametrizes the isomorphism classes of elliptic curves with prescribed $N$-torsion points.
%By adding finitely many points, called {\it cusps}, one can get its compactification which is denoted by $X_1(N)$.

%The modular curve $X_1(2,2N)$ corresponds to the congruence subgroup
%$$\Gamma_1(2,2N):=\left\{\begin{pmatrix} a&b\\c&d\end{pmatrix}\in \Gamma_1(2N)\,|\, b\equiv 0\,\,({\rm mod}\,\, 2)\right\}.$$
%% Fix a primitive $M$th root of unity $\zeta_M$. Let $e_M$ be a Wel pairing.

%We need some more modular curves. Let $\Delta$ be a subgroup of $({\mathbb Z}/{N \mathbb Z})^*$ which contains $-1.$ Let $X_\Delta(N)$ be the modular curve defined over $\mathbb Q$ associated to the congruence subgroup
%$$\Gamma_\Delta(N):=\left\{{\begin{pmatrix}a&b\\c&d\end{pmatrix}} \in\Gamma(1)\,|\,a\in\Delta, N\mid c \right\}.$$ Note that for
%$\Delta=\{\pm 1\}$ this is just $X_1(N).$

The {\it Tate normal form} of an elliptic curve with point $P=(0,0)$ is as follows:
$$E=E(b,c) : Y^2+(1-c)XY -bY=X^3 -bX^2,$$ and this is nonsingular if and only if $b\neq0.$
On the curve $E(b,c)$ we have the following by the chord-tangent method:
\begin{align}\label{eq:nP}\notag  \\ \notag
P&=(0,\ 0),\\ \notag
2P&=(b, \ bc),\\ \notag
3P &=(c, \ b-c),\\ 
4P &=\left(\frac{b(b-c)}{c^2}, \ -\frac{b^2(b-c-c^2)}{c^3}\right), \\ \notag
5P &= \left(-\frac{bc(b-c-c^2)}{(b-c)^2}, \ \frac{bc^2(b^2-bc-c^3)}{(b-c)^3}\right), \\ \notag
6P &= \left( \frac{(b-c)(b^2-bc-c^3)}{(b-c-c^2)^2}, \ \frac{c(2b^2-3bc-bc^2+c^2)(b-c)^2}{(b-c-c^2)^3}  \right).
\end{align}

In fact, the condition $NP= O$ in $E(b,c)$ gives a defining equation for $X_1(N)$.
For example, $11P=O$ implies $5P=-6P$, so
$$x_{5P}=x_{-6P}=x_{6P},$$
where $x_{nP}$ denotes the $x$-coordinate of the $n$-multiple $nP$ of $P$.
Eq. (\ref{eq:nP}) implies that
\begin{equation}\label{mod11}
-\frac{bc(b-c-c^2)}{(b-c)^2} = \frac{(b-c)(b^2-bc-c^3)}{(b-c-c^2)^2}.
\end{equation}
Without loss of generality, the cases $b=c$ and $b=c+c^2$ may be excluded.
Then Eq. (\ref{mod11}) becomes as follows:
\begin{align*}
-b^2c^3-6bc^5+3b^3c^2+9b^2c^4-3bc^6-3b^4c-4b^3c^3+3b^2c^5-bc^7+c^6+b^5=0,
\end{align*}
which is one of the equation $X_1(11)$ called the {\it raw form} of $X_1(11)$.
By the coordinate changes $b=(1-x)xy(1+xy)$ and $c=(1-x)xy$, we get the following equation:
$$f(x,y):=y^2+(x^2+1)y+x=0.$$

%\begin{align}\label{eq:nP}\notag
%P&=(0,\ 0),\\\notag
%2P&=(b, \ bc),\\\notag
%3P &=(c, \ b-c),\\
%4P &=\left(r(r-1), \ r^2(c-r+1)\right); \ \ b=cr,\\\notag
%5P &= \left(rs(s-1), \ rs^2(r-s)\right); \ \ c=s(r-1), \\ \notag
%6P &= \left( \frac{s(r-1)(r-s)}{(s-1)^2}, \ \frac{s^2(r-1)^2(rs-2r+1)}{(s-1)^3}  \right).
%\end{align}

%Very recently, by using the Tate normal form, Sutherland~\cite{Su} found optimized forms for defining equations of the modular curves $X_1(N)$ for $N=11, 13-50.$
%The formulas in Table~\ref{tb:Sutherland-table6} and Table~\ref{tb:Sutherland-table7} are taken directly from~\cite[Table 6 and Table 7]{Su}.
%We use those defining equations for $N=17,20,21,22,24$, which are given in Table~\ref{tb:Sutherland-table6}. We also need Table~\ref{tb:Sutherland-table7} for birational maps for $X_1(N)$ for our purpose.

%The condition $NP= O$ in $E(b,c)$ gives a defining equation for $X_1(N)$.
%For example, $11P=O$ implies $5P=-6P$, so
%$$x_{5P}=x_{-6P}=x_{6P},$$
%where $x_{nP}$ denotes the $x$-coordinate of $nP$ of $P$.
%Eq.~(\ref{eq:nP}) implies that
%\begin{equation}\label{mod11}
%rs(s-1) =   \frac{s(r-1)(r-s)}{(s-1)^2}.
%\end{equation}
%Without loss of generality, the cases $s=1$ and $s=0$ may be excluded.
%Then Eq.~(\ref{mod11}) becomes the following:
%$$r^2-4sr+3s^2r-s^3r+s=0,$$
%which is a defining equation for $X_1(11)$, called the {\it raw form} of $X_1(11)$. 
%By the coordinate changes $s=1-x$ and $r=1+xy$, we get another defining equation for $X_1(11)$ as follows:
%$$y^2+(x^2+1)y+x=0.$$

Now we note that
\begin{align*}
\left(	\C/\Lambda_\tau, \,  \frac{1}{N}+\Lambda_\tau	\right)		&=  \left(	y^2=4x^3-g_2(\tau)x-g_3(\tau),\, \left(\wp\left(\frac{1}{N}; \Lambda_\tau	\right),\wp'\left(\frac{1}{N}; \Lambda_\tau	\right)\right)\right) \\
&=\left(	y^2+(1-c(\tau))xy-b(\tau)y=x^3-b(\tau)x^2,\, (0,0)\right),
\end{align*}
where $\wp(z; \Lambda_\tau)$ is the Weierstrass elliptic function of the period $\Lambda_\tau$.
From \cite{Baaziz}, it follows that
\begin{equation}\label{eq:b-c}
b(\tau)=-\frac{(\wp(\frac{1}{N}; \Lambda_\tau	)-\wp(\frac{2}{N}; \Lambda_\tau	))^3}{\wp'(\frac{1}{N}; \Lambda_\tau	)^2},\,\,
c(\tau)=-\frac{\wp'(\frac{2}{N}; \Lambda_\tau	)}{\wp'(\frac{1}{N}; \Lambda_\tau	)}
\end{equation}
are modular functions on $\Gamma_1(N)$ and
generate the function field of $X_1(N)$, where the derivative $\wp'$ is with respect to $z$.
%Also the function field of $X_1(N)$ can be generated by $x,y$ satisfying the equation $f_N(x,y)$
%in Table \ref{tb:Sutherland-table6} for the case $N=17,20,21,22,24$, and $x,y$ are considered as functions of $\tau$ via the rational maps of (\ref{tb:Sutherland-table6}) and (\ref{eq:b-c}).

\subsection{Field of definitions of $b(\tau)$ and $c(\tau)$ for a CM-points $\tau$}			\Label{Field}

Let $\mathcal F_N$ be the extension of the function field $\Q(j(\tau))$ generated by the Fricke functions indexed by $r \in \frac 1N \Z^2/\Z^2$ (see \cite{Koo-Shin-Yoon}~Section~4), where $j(\tau)$ is the modular invariant function.
By the theory of modular functions, it is known that $\mathcal F_N$ is the set of all functions in $\C(X(N))$ whose Fourier coefficients are in $\Q(\zeta_N)$, $\mathcal F_1$ is simply $\Q(j(\tau))$, and

\[ \Gal(\mathcal F_N/\mathcal F_1) \cong \operatorname{GL}_2(\Z/N\Z) / \{ \pm I_2 \} \cong G_N \cdot \operatorname{SL}_2(\Z/N\Z) / \{ \pm I_2 \} \]
where

\[ G_N = \left\{ \left. \begin{bmatrix} 1&0 \\ 0&d \end{bmatrix} \; \right| \; d \in (\Z/N\Z)^* \right\}.	\]

The functions $b(\tau), c(\tau)$ have their Fourier coefficients in $\Q(\zeta_N)$, and they are contained in $\mathcal F_N$. 

\vspace{2mm}

\begin{definition}		\Label{Rome}
\[ P_\tau=(b(\tau),c(\tau)) \in X_1(N).	\]
\end{definition}

\vspace{2mm}

Let $\mathcal O$ be an order of conductor $c$ in an imaginary quadratic field $K$. The ring class field of $\mathcal O$, denoted by $L_{\mathcal O}$, is determined via the Existence Theorem of class field theory \cite[Theorem 8.6]{Cox} by the subgroup 
$P_{K, \mathbb Z}(c) \subset I_K(c)$ generated by principal ideals $\alpha \mathcal O_K\in I_K(c)$ where $\alpha\equiv a \mod c\mathcal O_K$ for some $a\in \mathbb Z$. 
Here $I_K(c)$ denotes the group of all fractional ideals relatively prime to $c$.
This implies that 
$$ 
Gal (L_{\mathcal O}/K) \cong I_K(c)/P_{K,\mathbb Z}(c) \cong C(\mathcal O), 
$$
where $C(\mathcal O)$ is the class group of $\mathcal O$. 
Following \cite{Cho} we define 
$$
P_{K, \mathbb Z, N}(cN) \subset I_K(cN)
$$
to be the subgroup generated by the principal ideals $\alpha \mathcal O_K\in I_K(cN)$ where
$\alpha\in \mathcal O_K$ satisfies 
$$
\alpha\equiv a \mod cN\mathcal O_K
\hbox{ for some $a\in \mathbb Z$ with $a\equiv 1 \mod N$}. 
$$
It then follows from the Existence Theorem that there exists an extension $L_{\mathcal O,N}$ called the {\it extended ring class field of level $N$}, with Galois group
$$
Gal (L_{\mathcal O,N}/K) \cong I_K(cN)/P_{K,\mathbb Z,N}(cN).
$$
We note that $L_{\mathcal O, 1}=L_{\mathcal O}$ and $L_{\mathcal O, N}$ is a Galois extension of $L_{\mathcal O}$. In particular, if $\mathcal O=\mathcal O_K$, then $L_{\mathcal O, N}$ is equal to the ray class field $K(N)$. 

A point $\tau \in K\cap \mathbb H$ is a root of $ax^2+bx+c$ where $a,b, c\in \mathbb Z$ are relatively prime with $a>0$. Then the lattice $L_\tau=[1,\tau]$ is a proper ideal for the order $\mathcal O=[1, a\tau]$ (see \cite[Theorem 7.7]{Cox}). As a consequence of Shimura reciprocity we have the following theorem.

\begin{theorem} \cite[Theorem 15.16]{Cox}	\Label{Cox}
Fix $\tau \in K\cap \mathbb H$ and 
$\mathcal O$ as above and assume that $f(\tau)$ is well-defined for a modular function $f\in \mathcal F_N$. Then $f(\tau)\in L_{\mathcal O, N}$. 
\end{theorem}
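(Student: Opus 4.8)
The plan is to deduce this from Shimura's reciprocity law, following \cite[Chapter 15]{Cox}. Recall the ambient picture: $\mathcal F=\bigcup_{N}\mathcal F_N$ is the field of modular functions whose Fourier expansion at $\infty$ has coefficients in $\mathbb Q^{ab}=\bigcup_N\mathbb Q(\zeta_N)$, with $\Gal(\mathcal F/\mathbb Q(j))\cong GL_2(\widehat{\mathbb Z})/\{\pm I\}$; at finite level, $\mathcal F_N$ is the subfield cut out by the principal congruence subgroup modulo $N$ together with the rule governing the action on the cyclotomic Fourier coefficients (the $G_N$-part, via $\zeta_N\mapsto\zeta_N^d$). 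Two inputs from the general theory of complex multiplication are needed. First, $f(\tau)\in K^{ab}$ for every $f\in\mathcal F$ finite at $\tau$ (here one uses $\mathbb Q^{ab}\subseteq K^{ab}$). Second, the reciprocity law itself: attached to the fixed $\tau$, which generates the order $\mathcal O=[1,a\tau]$ of conductor $c$, there is a map $\mathfrak a\mapsto g_\tau(\mathfrak a)\in GL_2(\mathbb Z/N\mathbb Z)/\{\pm I\}$ on $\mathcal O_K$-ideals prime to $cN$ such that $f(\tau)^{\sigma_{\mathfrak a}}=f^{g_\tau(\mathfrak a)}(\tau)$ for every $f\in\mathcal F_N$ finite at $\tau$, where $\sigma_{\mathfrak a}=\left(\frac{K^{ab}/K}{\mathfrak a}\right)$ is the Artin symbol; moreover $g_\tau(\mathfrak a)$ is read off from the action of $\mathfrak a$ on $\frac1N L_\tau/L_\tau$, and for $\mathfrak a=\alpha\mathcal O_K$ principal with $\alpha$ prime to $cN$ it is the class of the (invertible) matrix by which $\alpha$ acts on $L_\tau/NL_\tau\cong(\mathbb Z/N\mathbb Z)^2$ in the basis $\{1,\tau\}$.

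Granting this, the argument is short. By the Existence Theorem, $L_{\mathcal O,N}$ is the subfield of $K^{ab}$ fixed by the image under the Artin map of $P_{K,\mathbb Z,N}(cN)$, so, since $\mathfrak a\mapsto\sigma_{\mathfrak a}$ is a homomorphism and $P_{K,\mathbb Z,N}(cN)$ is generated by the ideals $\alpha\mathcal O_K\in I_K(cN)$ with $\alpha\equiv a\pmod{cN\mathcal O_K}$ for some $a\in\mathbb Z$ satisfying $a\equiv 1\pmod N$ (the inverse of such a generator being again of this form), it suffices to show that $f(\tau)$ is fixed by $\sigma_{\alpha\mathcal O_K}$ for each such generator. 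By the reciprocity law this reduces to showing $g_\tau(\alpha\mathcal O_K)=I$ in $GL_2(\mathbb Z/N\mathbb Z)/\{\pm I\}$.

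That is a short congruence. Since $\mathcal O$ has conductor $c$ we have $c\mathcal O_K\subseteq\mathcal O$; and since $L_\tau=[1,\tau]$ is a proper $\mathcal O$-ideal (\cite[Theorem 7.7]{Cox}) we have $\mathcal O\cdot L_\tau=L_\tau$, whence $cN\mathcal O_K\cdot L_\tau\subseteq NL_\tau$. Therefore $\alpha\equiv a\pmod{cN\mathcal O_K}$ gives $(\alpha-a)L_\tau\subseteq NL_\tau$, i.e. multiplication by $\alpha$ and by the rational integer $a$ induce the same endomorphism of $L_\tau/NL_\tau$; but that endomorphism is the scalar $a\cdot I$, which equals $I$ because $a\equiv 1\pmod N$. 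Hence $g_\tau(\alpha\mathcal O_K)=I$ — consistently, its determinant is $N_{K/\mathbb Q}(\alpha)\equiv a^2\equiv 1\pmod N$, so the $G_N$-component is trivial as well — and therefore $f(\tau)\in L_{\mathcal O,N}$.

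I expect the main obstacle to be not this computation but pinning down the reciprocity law precisely enough to apply it: translating between ideles and ideals, fixing the $GL_2^+(\mathbb Q)$-normalization implicit in the symbol $g_\tau$, and checking that the $SL_2(\mathbb Z)$-ambiguity in choosing a $\mathbb Z$-basis of $\mathfrak a^{-1}L_\tau$ does not disturb the congruence $g_\tau(\mathfrak a)\equiv I\pmod N$. All of this is the content of \cite[Chapter 15]{Cox}, and once it is in place, the two congruence conditions defining $P_{K,\mathbb Z,N}(cN)$ — that $\alpha\equiv a\pmod{cN\mathcal O_K}$ and that $a\equiv 1\pmod N$ — are exactly what make $g_\tau$ trivial at level $N$, which is the whole point. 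Since the statement is itself \cite[Theorem 15.16]{Cox}, one may of course simply cite it; the above is the idea behind it.
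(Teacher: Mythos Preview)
Your proposal is correct and aligns with the paper's treatment: the paper does not prove this statement at all but simply cites it as \cite[Theorem~15.16]{Cox}, prefacing it only with the remark that it is ``a consequence of Shimura reciprocity.'' Your sketch is a faithful expansion of that citation---Shimura reciprocity plus the observation that the two congruence conditions defining $P_{K,\Z,N}(cN)$ force $g_\tau(\alpha\OO_K)$ to act trivially at level $N$---and your closing comment that one may simply cite Cox is exactly what the paper does.
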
 
Thus we have the following immediate corollary.

\begin{corollary}			\Label{Carp}
Fix $\tau \in K\cap \mathbb H$ and 
$\mathcal O$ as above and assume that $b(\tau), c(\tau)$ are defined. Then  $b(\tau), c(\tau)\in L_{\mathcal O, N}$ and
therefore the point $P_\tau$ is defined over
$L_{\mathcal O, N}$.
\end{corollary}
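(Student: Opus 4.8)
The plan is to deduce the corollary directly from Theorem~\ref{Cox}, the only genuine point to verify being that the two modular functions $b$ and $c$ lie in the field $\mathcal{F}_N$. Recall that $\mathcal{F}_N$ was described as the set of all functions in $\mathbb{C}(X(N))$ whose $q$-expansion coefficients lie in $\mathbb{Q}(\zeta_N)$. Since $\Gamma(N)\subset\Gamma_1(N)$, any modular function on $\Gamma_1(N)$ is in particular a function on $X(N)$, so $b,c\in\mathbb{C}(X(N))$; and by the explicit formulas~(\ref{eq:b-c}), $b(\tau)$ and $c(\tau)$ are fixed rational expressions in the Weierstrass values $\wp(\tfrac1N;\Lambda_\tau)$, $\wp(\tfrac2N;\Lambda_\tau)$, $\wp'(\tfrac1N;\Lambda_\tau)$, $\wp'(\tfrac2N;\Lambda_\tau)$, whose standard Fourier expansions have coefficients in $\mathbb{Z}[\zeta_N]$. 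Hence the rational combinations $b$ and $c$ again have Fourier coefficients in $\mathbb{Q}(\zeta_N)$, which gives $b,c\in\mathcal{F}_N$ --- this is precisely the membership already recorded in Section~\ref{sec:Preliminaries} just before Definition~\ref{Rome}, attributed to~\cite{Baaziz}.

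Next I would simply invoke Theorem~\ref{Cox} (Cox, \cite[Theorem 15.16]{Cox}) twice, once with $f=b$ and once with $f=c$. The only hypothesis of that theorem beyond $f\in\mathcal{F}_N$ is that $f(\tau)$ be well-defined, and this is exactly the standing assumption of the corollary that $b(\tau)$ and $c(\tau)$ are defined. The theorem then yields $b(\tau)\in L_{\mathcal{O},N}$ and $c(\tau)\in L_{\mathcal{O},N}$, where $\mathcal{O}$ is the same order (namely $[1,a\tau]$, for which $L_\tau=[1,\tau]$ is a proper ideal) fixed in the paragraph preceding Theorem~\ref{Cox}.

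Finally, to pass from the coordinates to the point itself: by Jeon--Kim--Lee~\cite{Jeon-Kim-Lee}, recalled in Section~\ref{sec:Preliminaries}, the pair $(b,c)$ defines an affine model of $Y_1(N)$ over $\mathbb{Q}$, cut out inside $\mathbb{A}^2$ by the relation(s) coming from $NP=O$ in the Tate normal form $E(b,c)$. The point $P_\tau=(b(\tau),c(\tau))$ satisfies those relations and has both coordinates in $L_{\mathcal{O},N}$, so it is an $L_{\mathcal{O},N}$-rational point of that affine model, hence of $X_1(N)$. I do not expect any real obstacle: the statement is a formal consequence of Theorem~\ref{Cox}, and the only step deserving a line of justification is the membership $b,c\in\mathcal{F}_N$, which is immediate from~(\ref{eq:b-c}) together with the rationality of the Weierstrass Fourier coefficients established in~\cite{Baaziz}.
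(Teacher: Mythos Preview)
Your proposal is correct and follows exactly the same approach as the paper: the paper's proof is the single line ``This immediately follows from Theorem~\ref{Cox} because $b(\tau),c(\tau)\in\mathcal{F}_N$.'' Your version simply spells out the justification for $b,c\in\mathcal{F}_N$ and the passage from coordinates to the point, which the paper leaves implicit.
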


\begin{proof} This immediately follows from Theorem~\ref{Cox} because $b(\tau),c(\tau) \in \mathcal F_N$.
\end{proof}

%%%%%%%%%%%%%%%%%%%

\vspace{2mm}

\begin{section}{The Main Theorem of Complex Multiplication, and the action of the Galois groups on $P_\tau$} \label{Berlin}
\vspace{2mm}

In this section, we apply Shimura's theory of complex multiplication to $P_{\tau}$ to study the action of the Galois groups of extended ring class fields, and in particular, we find the field of definition of $P_{\tau}$ by other means.

As before, the lattice $(\alpha, \alpha')$ denotes $\Z\alpha+\Z\alpha'$.

%%%%%%%%%%%%%%%%%%%%%%%%%
%%%%%%%%%%%%%%%%%%%%%%%%%
%%%%%%%%%%%%%%%%%%%%%%%%%
%%%%%%%%%%%%%%%%%%%%%%%%%
%%%%%%%%%%%%%%%%%%%%%%%%%
%%%%%%%%%%%%%%%%%%%%%%%%%
%%%%%%%%%%%%%%%%%%%%%%%%%

The following is from \cite{Shimura}~Section~5.2 and Section~5.3.

Suppose $\Lambda$ is an arbitrary $\Z$-lattice in $K$. For each rational prime $p$, let $K_p=K\otimes_\Q \Qp$ and $\Lambda_p = \Lambda \otimes_\Z \Zp$ (so that $\mathbb A_K=\prod_p K_p$). It is worth noting that if $p$ splits completely over $K/\Q$ (so that $p\OO_K=\mfp\barmfp$), then $K_p=K_\mfp \times K_{\barmfp}$.

For any $x \in \mathbb A_K^*$, we may speak of the $p$-component $x_p$ of $x$ belonging to $K_p^*$. (In other words, if $p$ is inert, $x_p \in K_{p\OO_K}^*$, if $p$ splits completely, $x_p=(x_\mfp, x_{\barmfp}) \in K_\mfp^* \times K_{\barmfp}^*$, and if $p$ is ramified, $x_p \in K_\mfp^*$ for the unique prime $\mfp$ of $\OO_K$ above $p$.)

We observe that $x_p \Lambda_p$ is a $\Zp$-lattice in $K_p$. It is well-known that there exists a $\Z$-lattice $\Lambda'$ in $K$ such that $\Lambda'_p=x_p \Lambda_p$ for every $p$ (\cite{Shimura}~page 116). Then, we define

\[ x\Lambda \stackrel{def}= \Lambda'. \]

The isomorphism $x: K/\Lambda \stackrel{\times x}\to K/x \Lambda$ is given as follows: Since $\Q/\Z =\prod_p \Qp/\Zp$ canonically, we have the canonical decomposition $K/\Lambda = \prod_p K_p/ \Lambda_p$. There is a well-defined isomorphism given by multiplication $x_p: K_p/ \Lambda_p \stackrel{\times x_p}\to K_p/ x_p \Lambda_p$ for each prime $p$. Combining them for all $p$, we obtain an isomorphism $x: K/\Lambda \to K/x \Lambda$. In other words, $x: K/\Lambda \to K/x \Lambda$ is an isomorphism which makes the following diagram commutative for every prime $p$:

\begin{eqnarray*}
K_p/\Lambda_p	&	\stackrel{x_p}\longrightarrow	& K_p/x_p \Lambda_p	\\
\downarrow	&		&\downarrow		\\
K/\Lambda	&		\stackrel{x}\longrightarrow	& K/x \Lambda.
\end{eqnarray*}

The following is by Shimura, \textit{et. al.}
\begin{theorem}[Main Theorem of Complex Multiplication, \cite{Shimura}~Chapter~5 Theorem~5.4.] 		\label{MT}
Recall that $K$ is an imaginary quadratic field. Let $\Lambda \subset K$ be a lattice in $K$, $\sigma$ be an automorphism of $\C$ invariant on $K$ (in other words, a $K$-automorphism of $\C$), $s$ be an element of $\mathbb A_K^{\times}$ so that $\sigma|_{K_{ab}}=[s, K]$, and $E$ be an elliptic curve so that there is an analytic isomorphism $\xi: \C/\Lambda \to E$. Then, there is an isomorphism $\xi': \C/ s^{-1} \Lambda \to E^{\sigma}$ so that the following is commutative:

\begin{eqnarray*}
K/\Lambda& \stackrel{\xi}\to  &  E_{tors} \\
s^{-1} \downarrow   &    &   \downarrow \sigma\\
K/ s^{-1}\Lambda  &  \stackrel{\xi'}\to   & E^{\sigma}_{tors}.
\end{eqnarray*}
($\xi'$ is uniquely determined by the above property once $\xi$ is fixed.)
\end{theorem}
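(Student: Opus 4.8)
This is the classical Main Theorem of Complex Multiplication, quoted verbatim from \cite{Shimura}, so in the paper one simply cites it. Were one to reprove it, here is the route I would take.

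\emph{Reductions.} First I would record that the assertion is multiplicative in $(\sigma,s)$ --- if it holds for $(\sigma_1,s_1)$ with isomorphism $\xi_1'$ and for $(\sigma_2,s_2)$ with $\xi_2'$, then it holds for $(\sigma_1\sigma_2,s_1s_2)$ with $\xi''=(\xi_1')^{\sigma_2}$ composed with the map furnished for $\sigma_2$, after a short diagram chase --- and that it holds trivially when $s=\alpha\in K^{\times}$ is a principal idele: there $[s,K]=1$, idele-multiplication by $\alpha$ on $K/\Lambda$ coincides with scalar multiplication since $\alpha$ is diagonal, $s^{-1}\Lambda=\alpha^{-1}\Lambda$, and one takes $\xi'=\xi\circ(\alpha\,\cdot)$. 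Since $E$ has a model over a finite abelian extension of $K$, the whole diagram depends only on $\sigma|_{K_{ab}}=[s,K]$, and modulo an open subgroup $U$ acting trivially on a fixed $E[n]$ every idele is --- using that every ideal class contains a degree-one prime --- a product of a principal idele, an element of $U$, and uniformizers $\pi_{\mfp}$ at degree-one primes $\mfp$ lying outside any prescribed finite set. Hence it suffices to prove the diagram on $E[n]$ when $s=\pi_{\mfp}$, i.e. when $\sigma|_{K_{ab}}=\operatorname{Frob}_{\mfp}$ for a degree-one prime $\mfp$ prime to $n$ at which $E$ has good reduction. Likewise I would reduce to $\Lambda$ a fractional $\OO_K$-ideal $\ma$ with $E$ having CM by $\OO_K$, via the isogeny $\C/\ma\to\C/\OO_K\ma$ coming from $\ma\hookrightarrow\OO_K\ma$ and the functoriality of the diagram in isogenies and homotheties.

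\emph{The Frobenius case.} Next I would realize $E$ and $E[n]$ over a number field $F\supseteq K$ with good reduction at a degree-one prime $\mathfrak P\mid\mfp$, chosen by Chebotarev so that $\sigma|_{F}=\operatorname{Frob}_{\mathfrak P}$, and pass to the reduction $\tilde E$ over $\mathbb{F}_{\mathfrak P}$. Since $p=\operatorname{char}\mathbb{F}_{\mathfrak P}$ is prime to $n$, reduction is injective on $E[n]$ and carries the action of $\operatorname{Frob}_{\mathfrak P}$ to that of the $p$-power Frobenius endomorphism $\varphi\in\End(\tilde E)$. I would then invoke Deuring's reduction theory: $\tilde E$ again has CM by $\OO_K$, the reduction $\tilde\xi$ of $\xi$ is $\OO_K$-equivariant on prime-to-$p$ torsion, and --- the Shimura--Taniyama congruence --- under $\End(\tilde E)\cong\OO_K$ one has $\varphi\OO_K=\mfp$. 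Running the parallel analysis for $E^{\sigma}$, whose $j$-invariant is $j(\mfp^{-1}\ma)=j(\ma)^{\operatorname{Frob}_{\mfp}}$, consistently with $s^{-1}\ma=\mfp^{-1}\ma$, one concludes that $\sigma$ acts on $E[n]$ as $\xi'\circ(s^{-1}\,\cdot)\circ\xi^{-1}$, which is the claim. Uniqueness of $\xi'$ is automatic, since two analytic uniformizations $\C/s^{-1}\ma\to E^{\sigma}$ that agree on all torsion coincide.

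\emph{The main obstacle.} The genuine content is the Shimura--Taniyama congruence identifying the Frobenius endomorphism of the reduced curve with a generator of $\mfp$, together with the bookkeeping of normalizations --- arithmetic versus geometric Frobenius, and the correct prime of $F^{\sigma}$ --- needed to see that the identity produced is the stated one and not its inverse. The primes above $p$, where $s$ is no longer a unit and the $\mfp$-primary torsion (on which $s^{-1}$ acts nontrivially) must be tracked, and the potentially bad primes, require the analogous but more delicate argument through N\'eron models and the connected--\'etale sequence; for that part I would defer to \cite{Shimura}.
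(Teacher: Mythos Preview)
You are correct that the paper does not prove this theorem at all: it is quoted as Theorem~5.4 of \cite{Shimura} and used as a black box. Your opening sentence already matches the paper exactly, and everything after it is supplementary material that the paper does not attempt.

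As for the sketch itself, the outline you give --- multiplicativity in $(\sigma,s)$, reduction to Frobenius elements at degree-one primes via Chebotarev and the principal-idele case, then the Shimura--Taniyama identification of the reduced Frobenius with a generator of $\mfp$ --- is the standard route and is essentially how Shimura's own proof proceeds. One small caution: your reduction ``to $\Lambda$ a fractional $\OO_K$-ideal with $E$ having CM by $\OO_K$'' is a genuine step that needs care, since the theorem as stated (and as used later in the paper, where $\Lambda_\tau$ has endomorphism ring a non-maximal order $\OO_c$) applies to arbitrary lattices in $K$; the passage from orders to $\OO_K$ via the isogeny $\C/\ma\to\C/\OO_K\ma$ must be checked to be compatible with the idele action on both sides, and this is where some of the delicacy in \cite{Shimura} lies. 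But since the paper simply cites the result, none of this affects the comparison.
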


\vspace{3mm}

Note that the precise definition of $s^{-1} \Lambda$ is given above.

As before, we let $P_{\tau}=(b(\tau), c(\tau))$ for $\tau \in K \cap \mathbb H$, and $\Lambda_\tau=(1, \tau)$.

For any lattice $\Lambda$ in $K$ we have the standard invariants

\[G_{2n}(\Lambda)=\sum _{\omega \in \Lambda, \omega \not=0} \omega^{-2n}, \quad g_2(\Lambda)=60\cdot G_4(\Lambda), \quad g_3(\Lambda)=140\cdot G_6 (\Lambda).\]

Suppose $E_{\tau}$ is the elliptic curve given by the Weierstrass equation 

\[y^2=4x^3-g_2(\Lambda_{\tau})x-g_3(\Lambda_{\tau})\]
so that there is an (analytic) isomorphism

\begin{eqnarray*}
\xi: \C/\Lambda_{\tau} 		& \to 		&	\quad \quad E_{\tau}		\\
z		&\mapsto		&	(\wp(z; \Lambda_{\tau}), \wp'(z; \Lambda_{\tau})).
\end{eqnarray*}

As in Theorem~\ref{MT}, $\sigma$ is any automorphism of $\C$ invariant on $K$, and $s \in \mathbb A_K^{\times}$ satisfies $[s, K]=\sigma|_{K_{ab}}$. By Theorem~\ref{MT}, there is an (analytic) isomorphism $\xi': \C/s^{-1}\Lambda_{\tau} \to E^{\sigma}_{\tau}$ such that the diagram in Theorem~\ref{MT} commutes. As well-known, there is a lattice $\Lambda'=(\omega_1', \omega_2')$ in $K$ such that

\[	g_2(\Lambda_{\tau})^{\sigma}=g_2(\Lambda'), \quad g_3(\Lambda_{\tau})^{\sigma}=g_3(\Lambda'), 	\]
and

\begin{eqnarray*}	\xi'': \C/\Lambda' 		&\to		& \quad \quad E_{\tau}^{\sigma}	\\
z		&\mapsto		& (\wp(z; \Lambda'), \wp'(z; \Lambda'))
\end{eqnarray*}
is an analytic isomorphism. Then, the composite map $\C/s^{-1} \Lambda_\tau \stackrel{\xi'}\longrightarrow E_{\tau}^{\sigma} \stackrel{\xi''^{-1}}\longrightarrow \C/\Lambda'$ is an analytic isomorphism, which is given by

\[ \C/s^{-1}\Lambda_{\tau} \stackrel{\times \lambda}\longrightarrow \C/\Lambda' 		\]
for some $\lambda \in \C^*$ (implying $\Lambda'=\lambda \cdot s^{-1} \Lambda_{\tau}$). In other words,

$$\xi'(z)=\xi''(\lambda \cdot z)$$
for $z\in \C/s^{-1} \Lambda_{\tau}$.

Therefore, for any $u \in K/\Lambda_\tau$,

\begin{eqnarray} 	\label{Houston}
\wp(u; \Lambda_{\tau})^{\sigma}		&=	&	\wp(\lambda \cdot s^{-1} u; \lambda \cdot s^{-1} \Lambda_{\tau}),\\
\label{Dallas}		\left( \wp'(u; \Lambda_{\tau}) \right)^{\sigma}		&=	&		\wp'(\lambda \cdot s^{-1} u; \lambda \cdot s^{-1} \Lambda_{\tau}).
\end{eqnarray}

Suppose $N\cdot \OO_K=\prod_{i=1}^k v_i^{n_i}$ ($n_i>0$) for some primes $v_1,\cdots, v_k$ of $\OO_K$. 

Suppose an order $\OO_c=\Z+c\OO_K$ acts on $\Lambda_\tau$ for some $c \in \Z (c>0)$ with $(c,N)=1$.

Suppose $\sigma$ is identity on $L_{\OO_c}$.  Since $\Gal(L_{\OO_c}/K)\cong \mathbb A_K^* / K^* \prod_v \OO_{c, v}^*$, $s\in \mathbb A_K^*$ satisfying $[s,K]=\left. \sigma \right|_{K_{ab}}$ should be indeed $s \in K^* \prod_v \OO_{c, v}^*$.

Write $s= \mu [\cdots, a_v, \cdots ]_v$ where $\mu \in K^*$ and $a_v \in \OO_{c,v}^*$ for each place $v$ of $K$. By the Chinese remainder theorem, there is $B \in \OO_K$ so that $B \equiv (ca_{v_i})^{-1} \pmod{v_i^{n_i}}$ for every $i=1,\cdots,k$. Let $C=cB$. Then, $C \in c\OO_K \subset \OO_c$, and naturally, $C\equiv a_{v_i}^{-1} \pmod{v_i^{n_i}}$.

Then, we have the following formula for the action of $\sigma$ on $P_\tau$.

\vspace{2mm}

\begin{proposition} 		\label{Vienna}

For an $L_{\OO_c}$-automorphism $\sigma$ of $\C$, and $C$ defined above, we have

\begin{eqnarray*}
b(\tau)^{\sigma}	&=&	\ds - \frac{\left(	\ds		\wp(  C \frac 1N;    \Lambda_{\tau})
-	\wp(  C \frac 2N;    \Lambda_{\tau})	 \right)^3}
{\ds	\wp'(  C \frac 1N;    \Lambda_{\tau})^2}	,		\\
c(\tau)^{\sigma}	&=&	\ds		-	\frac{\ds	\wp'(  C \frac 2N;    \Lambda_{\tau})^2}{\ds	\wp'(  C \frac 1N;    \Lambda_{\tau})^2}
\end{eqnarray*}

\end{proposition}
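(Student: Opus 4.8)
The plan is to apply the Main Theorem of Complex Multiplication (Theorem~\ref{MT}) together with the explicit formulas (\ref{eq:b-c}) for $b(\tau), c(\tau)$ in terms of Weierstrass values, and then use the defining decomposition of $s$ to replace the idelic action by multiplication by the integer $C$. First I would recall that $b(\tau), c(\tau)$ are (by (\ref{eq:b-c})) rational expressions in $\wp(\tfrac1N;\Lambda_\tau), \wp(\tfrac2N;\Lambda_\tau), \wp'(\tfrac1N;\Lambda_\tau), \wp'(\tfrac2N;\Lambda_\tau)$, all with rational coefficients; hence applying $\sigma$ commutes with these rational operations, and it suffices to compute $\wp(\tfrac{j}{N};\Lambda_\tau)^\sigma$ and $\wp'(\tfrac{j}{N};\Lambda_\tau)^\sigma$ for $j=1,2$. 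By (\ref{Houston}) and (\ref{Dallas}), with $u = \tfrac{j}{N}+\Lambda_\tau \in K/\Lambda_\tau$, we get $\wp(\tfrac{j}{N};\Lambda_\tau)^\sigma = \wp(\lambda s^{-1}\tfrac{j}{N};\lambda s^{-1}\Lambda_\tau)$ and similarly for $\wp'$.

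Next I would exploit the homogeneity of $\wp$: $\wp(\lambda z; \lambda \Lambda) = \lambda^{-2}\wp(z;\Lambda)$ and $\wp'(\lambda z;\lambda\Lambda) = \lambda^{-3}\wp'(z;\Lambda)$. Substituting this into the formula (\ref{eq:b-c}) for $b$ shows that the factor $\lambda^{-6}$ in the numerator $(\wp - \wp)^3$ cancels against the $\lambda^{-4}$ from $(\wp')^2$ in the denominator — wait, that would leave $\lambda^{-2}$; instead, one sees that $b(\tau)^\sigma = -(\wp(s^{-1}\tfrac1N;s^{-1}\Lambda_\tau) - \wp(s^{-1}\tfrac2N;s^{-1}\Lambda_\tau))^3 / \wp'(s^{-1}\tfrac1N;s^{-1}\Lambda_\tau)^2$ because $b$ is a modular function of weight $0$, i.e. invariant under simultaneous scaling of lattice and argument; the same holds for $c$. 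So the $\lambda$ genuinely disappears and we are reduced to understanding the isomorphism $s^{-1}: K/\Lambda_\tau \to K/s^{-1}\Lambda_\tau$ restricted to the $N$-torsion.

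The crux is then to show that this isomorphism, on the $N$-torsion subgroup $\tfrac1N\Lambda_\tau/\Lambda_\tau$, agrees with multiplication by the integer $C$. Write $s = \mu[\cdots,a_v,\cdots]_v$ with $\mu \in K^*$, $a_v \in \OO_{c,v}^*$. Multiplication by the principal idele $\mu$ only rescales the lattice and so (again by weight-$0$ invariance) contributes nothing to the values of $b,c$; thus we may assume $s = [\cdots,a_v,\cdots]_v$, a unit idele, so that $s^{-1}\Lambda_\tau = \Lambda_\tau$ and $s^{-1}$ acts as an automorphism of $K/\Lambda_\tau$. Componentwise, at each place $v_i$ dividing $N$ it acts by $a_{v_i}^{-1}$, and at all other places it fixes $\tfrac1N\Lambda_\tau/\Lambda_\tau$ (since those components are units at primes away from $N$, hence fix $N$-torsion). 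By construction $C \equiv a_{v_i}^{-1} \pmod{v_i^{n_i}}$ for each $i$, and $C \in \OO_c$ acts on $\Lambda_\tau$; therefore multiplication by $C$ on $\tfrac1N\Lambda_\tau/\Lambda_\tau$ has exactly the same effect as $s^{-1}$ at each $v_i \mid N$ and trivially elsewhere — so the two maps coincide on the full $N$-torsion. Applying this with the torsion points $\tfrac1N, \tfrac2N \in K/\Lambda_\tau$ converts $s^{-1}\tfrac{j}{N}$ into $C\tfrac{j}{N}$ in the $\wp$-arguments, yielding the stated formulas. The main obstacle, and the step requiring the most care, is this last identification: one must be precise about the normalization of the Artin map $[s,K]$ (arithmetic vs. geometric Frobenius, hence $s$ vs. $s^{-1}$) and about why the unit components away from $N$ act trivially on $N$-torsion while the principal part $\mu$ can be discarded — essentially checking that the composite $s^{-1}$ on $\tfrac1N\Lambda_\tau/\Lambda_\tau$ is well-defined independently of the chosen representative and lands on the same lattice, which is where the hypothesis $(c,N)=1$ and the Chinese Remainder choice of $B$ are used.
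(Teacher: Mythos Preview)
Your proposal is correct and follows essentially the same route as the paper: both arguments reduce, via (\ref{Houston})--(\ref{Dallas}) and the homogeneity of $\wp,\wp'$, to checking prime-by-prime that the unit-idele part $[\cdots,a_v,\cdots]^{-1}$ acts on $\tfrac1N\Lambda_\tau/\Lambda_\tau$ as multiplication by $C$ (trivially at $l\nmid N$, and by the CRT congruence $C\equiv a_{v_i}^{-1}$ together with $(c,N)=1$ at $v_i\mid N$), after which the scaling $\lambda\mu^{-1}$ cancels. The only cosmetic difference is that the paper cancels $\lambda$ and $\mu$ in one stroke at the end whereas you peel them off separately; your momentary miscount of the $\lambda$-exponent in $(\wp')^2$ is harmless since you immediately recover the correct cancellation via weight-$0$ invariance.
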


\begin{proof}
Since we assume $a_v \in \OO_{c,v}^*$ for each place $v$, $a_v^{-1}\Z_l(1,\tau)=\Z_l(1, \tau)$. Therefore, $[\cdots, a_v, \cdots]^{-1}\Lambda_\tau=\Lambda_\tau$, and $s^{-1} \Lambda_\tau=\mu^{-1} \Lambda_\tau$.

If $l$ is a prime, and $l\nmid N$, then $\frac 1N\in \Z_l$, thus $\frac 1N \equiv 0$ modulo $\Z_l(1, \tau)$. Since $a_v \in \OO_{c,v}^*$ for each $v$, $\prod_{v|l} a_v^{-1} \frac 1N \in \Lambda_\tau \otimes \Z_l$, and since $C \in \OO_c$, $C \frac 1N \in \Lambda_\tau \otimes \Z_l$. In other words, $a_v^{-1} \frac 1N \equiv C \frac 1N \equiv 0$ modulo $\Lambda_\tau \otimes \Z_l$.

If $l|N$ and $v|l$ for a prime $l$ (\textit{i.e.}, $v=v_i$ for some $i=1,\cdots, k$), by construction $C\equiv a_{v_i}^{-1} \pmod{v_i^{n_i}}$, thus $C \equiv a_{v_i}^{-1} \pmod{N\OO_{K_{v_i}}}$. In other words, $a_{v_i}^{-1} \frac 1N - C \frac 1N \in \OO_{K_{v_i}}$. Since $\OO_c \subset \Lambda_\tau$  and $\OO_{c,v_i}=\OO_{K_{v_i}}$ (because $(c,N)=1$), $a_{v_i}^{-1} \frac 1N - C \frac 1N \in \Lambda_\tau \otimes \Z_l$. In other words, $a_{v_i}^{-1} \frac 1N \equiv C \frac 1N$ modulo $\Lambda_\tau \otimes \Z_l$.

Combined we have $C\frac 1N=[\cdots, a_v, \cdots]^{-1}\frac 1N \pmod{\Lambda_{\tau}}$.

Then, by (\ref{Houston}) and (\ref{Dallas}),

\begin{eqnarray*}	
		\ds \wp(\frac 1N; \Lambda_{\tau})^{\sigma}	&=	&	\wp(\lambda \cdot s^{-1} \frac 1N; \lambda \cdot s^{-1} \Lambda_{\tau}),	\\
&=& \wp(\lambda \cdot \mu^{-1} C\frac 1N; \lambda \cdot \mu^{-1} \Lambda_{\tau}),
\end{eqnarray*}
\begin{eqnarray*}	
	\ds	\left( \wp'(\frac 1N; \Lambda_{\tau}) \right)^{\sigma}	&=	&		\wp'(\lambda \cdot s^{-1} \frac 1N; \lambda \cdot s^{-1} \Lambda_{\tau})		\\
&=& \wp'(\lambda \cdot \mu^{-1} C\frac 1N; \lambda \cdot \mu^{-1} \Lambda_{\tau}).
\end{eqnarray*}

Thus, we have

\[ b(\tau)^{\sigma}	=	\ds - \frac{\left(	\ds		 \wp(\lambda \cdot \mu^{-1} C\frac 1N; \lambda \cdot \mu^{-1} \Lambda_{\tau})
-	 \wp(\lambda \cdot \mu^{-1} C\frac 2N; \lambda \cdot \mu^{-1} \Lambda_{\tau})	 \right)^3}
{\ds	 \wp'(\lambda \cdot \mu^{-1} C\frac 1N; \lambda \cdot \mu^{-1} \Lambda_{\tau})^2}.
\]
By noting  $\wp(\lambda \mu^{-1} z; \lambda \mu^{-1} \Lambda)=(\lambda \mu^{-1})^{-2} \wp(z; \Lambda)$ and $\wp'(\lambda \mu^{-1} z, \lambda \mu^{-1} \Lambda)= (\lambda \mu^{-1})^{-3} \wp'(z; \Lambda)$, we obtain our claim.

The case for $c(\tau)^\sigma$ is similar.
\end{proof}

Furthermore, suppose $\sigma$ is identity on $L_{\OO_c, N}$ (assuming $(c,N)=1$). Since $\Gal(L_{\OO_c, N}/K)\cong \mathbb A_K^* / K^* \prod_{v \nmid N} \OO_{c, v}^* \prod_{i=1}^k 1+v_i^{n_i}$, we can choose $s=\mu [\cdots, a_v, \cdots]_v$ so that $a_{v_i}\equiv 1 \pmod{v_i^{n_i}}$ for $i=1,\cdots, k$, thus we can choose $C$ which satisfies $C \equiv 1 \pmod N$. Thus by Proposition~\ref{Vienna}, we have

\begin{eqnarray*} b(\tau)^{\sigma}=b(\tau), \quad c(\tau)^{\sigma}=c(\tau). 
\end{eqnarray*}
Therefore, $P_\tau$ is defined over $L_{\OO_c, N}$.

%%%%%%%%%%%%%%%%%%%%%%%%%
%%%%%%%%%%%%%%%%%%%%%%%%%
%%%%%%%%%%%%%%%%%%%%%%%%%
%%%%%%%%%%%%%%%%%%%%%%%%%
%%%%%%%%%%%%%%%%%%%%%%%%%
%%%%%%%%%%%%%%%%%%%%%%%%%
%%%%%%%%%%%%%%%%%%%%%%%%%

\end{section}

%%%%%%%%%%%%%%%%%%%%%%%%5
%%%%%%%%%%%%%%%%%%%%%%%%
%%%%%%%%%%%%%%%%%%%%%

%%%%%%%%%%%%%%%%%%

\begin{section}{Euler systems}

\begin{subsection}{Hecke operators}

\hfill

Recall that we let $(\alpha, \alpha')$ denote $\Z\alpha+\Z\alpha'$. Also, we will let $\Zp(\alpha, \alpha')$ denote $\Zp\cdot \alpha + \Zp \cdot \alpha'$. 
We recall that a point on $X_1(N)_{/\Q} \otimes \C$ is given by $(E, P)$ where $E$ is a generalized elliptic curve over $\C$, and $P \in E[N]$. For simplicity, let $(E, P)$ also denote the divisor $(E,P) \in Div X_1(N)$.
First, we note that for a prime $p$ ($p\nmid N$), $T_p$ acts on $Div X_1(N)$ by

\[ T_p : (E, P) \mapsto \sum _C (E/C, P') \]
where $C$ runs over all cyclic subgroups of $E[p]$ of order $p$ (there are $p+1$ such subgroups), and $P'$ is the image of $P$ under $E\to E/C$ (see \cite{Wiles-1}~page~5).

In particular, when $(E,P)=(\C/\Lambda_{\tau}, 1/N)$,

\[ T_p(E,P)=\sum_{j=0}^{p-1} ( \C/\Lambda_{\frac{\tau+j}p}, 1/N) + \C/(1/p, \tau), 1/N). \]

As in Section~\ref{Berlin}, write

\[ N=\prod_{i=1}^k v_i^{n_i}\]
for prime ideals $v_i$ of $\OO_K$ and integers $n_i>0$. We recall from Section~\ref{Field} that when $\OO$ is an order of $K$ of conductor $c$ (i.e., $\OO=\Z+c\OO_K$) with $(c,N)=1$, $L_{\OO,N}$ is the extended ring class field of level $N$ which is characterized by

\begin{eqnarray*}
\Gal(L_{\OO,N}/K) &\cong &I_K (cN)/P_{K,\Z,N}(cN) 	\\
& \cong&  \mathbb A_K^*/K^* \prod_{v \nmid N} \OO_v^* \prod_i (1+v_i^{n_i}).
\end{eqnarray*}

Let $f(z)=\sum a_n q^n$ be a (standardized) newform of level $\Gamma_1(N)$ and character $\epsilon \pmod N$, and $\mu_f: X_1(N)\to J_1(N) \to A_f$ be a modular parametrization map where the first map is given by $P \mapsto (P)-(\infty)$ for any $P \in X_1(N)$, and $A_f$ is the quotient of $J_1(N)$ given in the standard way associated to $f$.

\vspace{2mm}

\begin{notation}Suppose $P_{\tau} \in X_1(N)(L)$ for some extension $L$ of $K$. We let $P_{\tau}$ also denote the image $\mu_f(P_{\tau}) \in A_f( L)$.
\end{notation}

\vspace{2mm}

\end{subsection}

\vspace{2mm}
%%%%%%%%%%%%%%%%%%%%%

\begin{subsection}{Distribution relations}

\hfill
%%%%%%%%%%%%%%%%%%%%%

Recall that $K=\Q(\sqrt D)$ where $D$ is a square-free negative integer. Let

\[ \tau= 
\left\{ 
\begin{array}{cll}
\sqrt D	&	\text{ if }		& D \not\equiv	1 \pmod 4		\\
\ds \frac{\sqrt D+1}2		&	\text{ if }		& D \equiv	1 \pmod 4
\end{array}
\right.	\]

Suppose $c$ is a positive integer prime to $N$, and let

\[		\tau'=\frac{a+\tau}c		\]
where $a,c \in \Z$. We note that $\OO_c=\Z+c\OO_K$ acts on $(1, \tau')$. As shown in Corollary~\ref{Carp} (and also in Section~\ref{Berlin}), $P_{\tau'} \in L_{\OO_c, N}$.

\vspace{2mm}
%%%%%%%%%%%%%%%%%%%%%

\begin{subsubsection}{Proof when $p$ is prime to the conductor}

\vspace{3mm}
%%%%%%%%%%%%%%%%%%%%%

\begin{theorem}			\Label{Spain}
Suppose $p$ is a prime number, $(c,pN)=1$, $p\equiv 1 \pmod N$, and $p$ is unramified and inert over $K/\Q$. Then,

\[ \Tr_{ L_{\OO_{cp}, N} / L_{\OO_c, N} } P_{\tau'/p} = a_p(f) P_{\tau'}. \]
($P_{\tau'/p}$ can be replaced by $P_{(\tau'+j)/p}$ for any $j\in \Z$, or $P_{p\tau'}$).
\end{theorem}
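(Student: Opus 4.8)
The plan is to prove the identity first as a statement about divisors on $X_1(N)$, and then push it forward through $\mu_f$. Concretely, I would establish that, as divisors on $X_1(N)$,
\[
\sum_{\sigma\in\Gal(L_{\OO_{cp},N}/L_{\OO_c,N})}\bigl(P_{\tau'/p}\bigr)^{\sigma}\;=\;T_p\bigl(\C/\Lambda_{\tau'},\tfrac1N\bigr),
\]
and separately that $p\equiv1\pmod N$ forces $T_p(\infty)=(p+1)(\infty)$ on $X_1(N)$. Granting these, one applies the $\Z$-linear extension of $\mu_f$ to $\operatorname{Div}X_1(N)$: because $\mu_f$ is defined over $\Q$ it carries the left-hand side to $\Tr_{L_{\OO_{cp},N}/L_{\OO_c,N}}\mu_f(P_{\tau'/p})$, while the right-hand side is carried to $\mu_f\bigl(T_p(\C/\Lambda_{\tau'},\tfrac1N)-(p+1)(\infty)\bigr)=\mu_f\bigl(T_p((P_{\tau'})-(\infty))\bigr)=a_p(f)\,P_{\tau'}$, using $T_p(\infty)=(p+1)(\infty)$ together with the fact that the projection $J_1(N)\to A_f$ carries $T_p$ to multiplication by $a_p(f)$. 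The final parenthetical remark of the theorem then follows immediately, because $\{P_{(\tau'+j)/p}\}_{j}\cup\{P_{p\tau'}\}$ turns out to be a single $\Gal(L_{\OO_{cp},N}/L_{\OO_c,N})$-orbit, so all these points have the same trace.

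To compute the right-hand side I would use the formula for $T_p$ on $\operatorname{Div}X_1(N)$ recalled in Section~\ref{sec:Preliminaries},
\[
T_p\bigl(\C/\Lambda_{\tau'},\tfrac1N\bigr)=\sum_{j=0}^{p-1}\bigl(\C/\Lambda_{(\tau'+j)/p},\tfrac1N\bigr)+\bigl(\C/(\tfrac1p,\tau'),\tfrac1N\bigr),
\]
the sum ranging over the $p+1$ index-$p$ superlattices of $\Lambda_{\tau'}$. Each of the first $p$ terms is, by the moduli interpretation of Section~\ref{sec:Preliminaries}, literally $P_{(\tau'+j)/p}$. For the last term, the homothety $z\mapsto pz$ identifies $(\C/(\tfrac1p,\tau'),\tfrac1N)$ with $(\C/\Lambda_{p\tau'},\tfrac pN)$, and here $p\equiv1\pmod N$ gives $\tfrac pN\equiv\tfrac1N$ in $\tfrac1N\Z/\Z$, so this term is $P_{p\tau'}$. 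The same congruence shows that the cusp occurring in $T_p(\infty)$ with a rescaled level structure is again $\infty$, whence $T_p(\infty)=(p+1)(\infty)$.

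For the orbit identification, note that since $p$ is inert and $(c,pN)=1$ the idelic descriptions of the extended ring class fields from Sections~\ref{Field} and \ref{Berlin} give
\[
\Gal\bigl(L_{\OO_{cp},N}/L_{\OO_c,N}\bigr)\;\cong\;\OO_{K,p}^{*}\big/\bigl(\Zp+p\OO_{K,p}\bigr)^{*}\;\cong\;\mathbb F_{p^2}^{*}/\mathbb F_p^{*},
\]
cyclic of order $p+1$ (the global units contribute only $\pm1$, already lying in $(\Zp+p\OO_{K,p})^{*}$). I would then apply the Main Theorem of Complex Multiplication (Theorem~\ref{MT}), exactly as in Proposition~\ref{Vienna} but now for $\sigma$ only fixing $L_{\OO_c,N}$: choosing an idele $s$ representing $\sigma$ that is a unit away from $p$ and away from the primes dividing $N$, one gets that $(P_{\tau'/p})^{\sigma}$ is the CM point of the lattice $s^{-1}\Lambda_{\tau'/p}$ with the transported $N$-level structure. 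Because $\Lambda_{\tau'/p}$ is a proper $\OO_{cp}$-ideal coinciding with $\Lambda_{\tau'}$ at every prime off $p$, so is $s^{-1}\Lambda_{\tau'/p}$; and at $p$, as $s_p$ runs over $\OO_{K,p}^{*}/(\Zp+p\OO_{K,p})^{*}$, the lattices $s_p^{-1}\Lambda_{\tau'/p,p}$ run (up to homothety) through precisely the $p+1$ superlattices of $\Lambda_{\tau',p}$ of index $p$ in $\tfrac1p\Lambda_{\tau',p}$, each of which has CM order $\Zp+p\OO_{K,p}=\OO_{cp,p}$ since $\tfrac1p\OO_{K,p}/\OO_{K,p}$ is a simple $\OO_{K,p}$-module ($p$ being inert). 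Hence the conjugates of $P_{\tau'/p}$ are exactly the $p+1$ points of these superlattices, namely $\{P_{(\tau'+j)/p}\}_{j}\cup\{P_{p\tau'}\}$, on which the order-$(p+1)$ group $\Gal(L_{\OO_{cp},N}/L_{\OO_c,N})$ acts simply transitively; therefore $\sum_\sigma(P_{\tau'/p})^{\sigma}$ equals the sum of all $p+1$ of them, which by the preceding paragraph is $T_p(\C/\Lambda_{\tau'},\tfrac1N)$.

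The main obstacle, I expect, is precisely the bookkeeping in this last step: reconciling the idelic parametrization of $\Gal(L_{\OO_{cp},N}/L_{\OO_c,N})$ with the lattice/Hecke picture, so as to be certain that the modified lattices $s^{-1}\Lambda_{\tau'/p}$ sweep out \emph{all} $p+1$ superlattices (not merely a proper sub-multiset), that each has CM order exactly $\OO_{cp}$, and that the action on them is faithful, so that the orbit has size exactly $p+1$ and the divisor identity holds on the nose rather than up to a correction term. Threading the $\Gamma_1(N)$-level structure correctly through the homothety in the second step and through the Main Theorem of CM in the third is where $p\equiv1\pmod N$ enters (twice: once for $P_{p\tau'}$, once for the cusp); those points are routine but need care.
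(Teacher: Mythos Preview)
Your proposal is correct and follows essentially the same route as the paper: identify the $\Gal(L_{\OO_{cp},N}/L_{\OO_c,N})$-orbit of $P_{\tau'/p}$ with the $p+1$ terms of $T_p(P_{\tau'})$ via the Main Theorem of Complex Multiplication and the idelic description $\Gal(L_{\OO_{cp},N}/L_{\OO_c,N})\cong \OO_{K,p}^*/(\Zp+p\OO_{K,p})^*$, then use that $T_p$ acts by $a_p(f)$ on $A_f$. The only differences are cosmetic: the paper produces the ideles explicitly as $s_j=(1,\dots,a+cj+\tau,\dots)$ and checks by hand (Lemma~\ref{Sushi}) that they exhaust $\OO_{K,p}^*/(\Zp+p\OO_{K,p})^*$ and that $s_j\Lambda_{p\tau'}=\Lambda_{(\tau'+j)/p}$, whereas you argue more structurally via the simplicity of $\tfrac1p\OO_{K,p}/\OO_{K,p}$; and you make the cusp bookkeeping $T_p(\infty)=(p+1)(\infty)$ explicit, which the paper leaves implicit in passing from divisors to $A_f$.
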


\vspace{2mm}
%%%%%%%%%%%%%%%%%%%%%

\begin{proof}
Recall

\begin{eqnarray*}
T_p \left(\C/\Lambda_{\tau'}, \ds \frac 1N \right)	&=	& \sum_{j=0}^{p-1} \left(\C/\Lambda_{\frac{\tau'+j}p}, \ds \frac 1N \right) + \left(\C/( \frac 1p, \tau'), \ds \frac 1N \right) 			\\
&=&		\sum_{j=0}^{p-1} \left(\C/\Lambda_{\frac{\tau'+j}p}, \ds \frac 1N \right) + \left(\C/( 1, p \tau'), \ds \frac pN \right) 	\\
&=&		\sum_{j=0}^{p-1} \left(\C/\Lambda_{\frac{\tau'+j}p}, \ds \frac 1N \right) + \left(\C/( 1, p \tau'), \ds \frac 1N \right) 
\end{eqnarray*}
(the last equality because $p\equiv 1 \pmod N$).

For any prime $l (\not=p)$ and integer $j$,

\[ \Z_l(1, \ds \frac{\tau'+j}p ) = \Z_l (1, \tau'+j) = \Z_l (1, \tau') = \Z_l (\frac 1p, \tau').\]

On the other hand,

\begin{eqnarray*}
\Zp(1, \ds \frac{\tau'+j}p )		&=&	\Zp(1, \ds \frac{ \ds \frac{a+cj+\tau}c}p)		\\
&=& (a+cj+\tau) \cdot \Zp( \ds \frac 1{a+cj+\tau} , \frac 1{cp} )		\\
&=& (a+cj+\tau) \cdot \Zp( \ds \frac{a+cj+\bar\tau}{N_{K/\Q}(a+cj+\tau)}, \frac 1p).
\end{eqnarray*}

%%%%%%%
\textit{Case 1.} $D\not\equiv1 \pmod 4$.

First, note

\[ \ds \frac{a+cj+\bar\tau}{N_{K/\Q}(a+cj+\tau)}= \frac{a+cj-\tau}{(a+cj)^2-D}. \]
Since $p$ is unramified and inert over $K/\Q$, $(a+cj)^2-D \not\equiv 0 \pmod p$ for any $a+cj$. Thus,

\begin{eqnarray*} 
\Zp( \ds \frac{a+cj+\bar\tau}{N_{K/\Q}(a+cj+\tau)}, \frac 1p)		&=&\Zp (a+cj-\tau, \frac 1p)		\\
&=&\Zp(\tau, \frac 1p).
\end{eqnarray*}

%%%%%%
\vspace{4mm}

\textit{Case 2.} $D \equiv1 \pmod 4$.

\[ \ds \frac{a+cj+\bar\tau}{N_{K/\Q}(a+cj+\tau)}= \frac{(a+cj)+(-\tau+1)}{(a+cj)^2 + (a+cj)+\ds \frac{1-D}4}. \]
For a reason similar to the above, $(a+cj)^2 + (a+cj)+\ds \frac{1-D}4 \not\equiv 0 \pmod p$ for any $a+cj$. Thus,

\begin{eqnarray*} 
\Zp( \ds \frac{a+cj+\bar\tau}{N_{K/\Q}(a+cj+\tau)}, \frac 1p)&=&\Zp ((a+cj)+(-\tau+1), \frac 1p)		\\
&=&\Zp(\tau, \frac 1p).
\end{eqnarray*}

\vspace{5mm}

In either case,

\begin{eqnarray*}
\Zp(1, \ds \frac{\tau'+j}p )&=& (a+cj+\tau) \cdot \Zp( \ds \frac{a+cj+\bar\tau}{N_{K/\Q}(a+cj+\tau)}, \frac 1p)		\\
&=& (a+cj+\tau) \cdot \Zp(\tau, \frac 1p),
\end{eqnarray*}
and since $(c,p)=1$,

\[ \Zp(\tau, \frac 1p)=  \Zp( \ds \frac{a+\tau}c, \frac 1p)=\Zp(\tau', \frac 1p).\]

For each $j \in \Z$, let
\[ s_j=(1,1,\cdots, a+cj+\tau, 1,\cdots ) \in \mathbb A_K^*\]
where $a+cj+\tau \in \OO_{K_p}^*$ is the $p$-component. We note $s_j \in \prod_{v\nmid N} \OO_{c,v}^* \prod_i 1+v_i^{n_i}$ (because all components of $s_j$ are $1$ except for the $p$-component, and $\OO_{c,p}=\OO_{K_p}$ and $a+cj+\tau \in \OO_{K_p}^*$). So far, we have shown

\[ ( 1, \ds \frac{\tau'+j}p )= s_j \cdot ( \frac 1p, \tau'). \]

\vspace{3mm}

%%%%%%%%%%%%%%%%%%%%%%%%%%%

\begin{lemma}		\Label{Sushi}
\begin{enumerate}[(a)]
\item $s_j \ds \frac 1N=\frac 1N$ where $\ds\frac 1N$ on the left is considered an element of $K/(\frac 1p, \tau')$, and the one on the right an element of $K/ \Lambda_{\frac{\tau'+j}p}$.

\item  $\{ a+cj+\tau \}_{j=0,1,\cdots, p-1} \cup \{1\}= \OO_{c,p}^*/ \OO_{cp, p}^* (\cong \OO_{K_p}^*/(\Zp+p\OO_{K_p})^*)$. 

\item Consequently, $\{ [s_j, K]|_{ L_{\OO_{cp},N} } \} \cup \{ id \}= \Gal( L_{\OO_{cp},N} / L_{\OO_{c},N} )$.
\end{enumerate}
\end{lemma}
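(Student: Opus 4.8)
The plan is to treat (a) as a locality statement, (b) as a computation in the local unit group at $p$, and (c) as a formal consequence of (b) together with class field theory. Throughout I would use that $p\nmid cN$ and that $p$ is inert in $K/\Q$.

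For (a): the idele $s_j$ equals $1$ in every component except the $p$-component, where it is $a+cj+\tau$. By the component-wise description of the action of $\mathbb A_K^*$ on quotients $K/\Lambda$ recalled in Section~\ref{Berlin}, the isomorphism $s_j\colon K/(\tfrac1p,\tau')\to K/s_j(\tfrac1p,\tau')=K/\Lambda_{\frac{\tau'+j}p}$ is the identity on the $\ell$-component for every prime $\ell\neq p$. I would record the routine identity $\Z_\ell(\tfrac1p,\tau')=\Z_\ell(1,\tau)=\Z_\ell(1,\tfrac{\tau'+j}p)$ for $\ell\mid N$ (valid because $c$ and $p$ are $\ell$-adic units there), so that the $\ell$-components of the source and target lattices literally coincide for $\ell\mid N$. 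Since $\tfrac1N\in K/\Lambda$ is supported only at the primes dividing $N$ and $p\nmid N$, it is fixed by $s_j$; this is (a).

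For (b): since $(c,p)=1$ we have $\OO_{c,p}=\OO_{K_p}$, the ring of integers of the unramified quadratic extension of $\Qp$ (unramified because $p$ is inert), while $\OO_{cp,p}=\Zp+p\OO_{K_p}$ is its order of conductor $p$. Reduction modulo $p\OO_{K_p}$ identifies $\OO_{c,p}^*/\OO_{cp,p}^*$ with $\mathbb F_{p^2}^*/\mathbb F_p^*$, a group of order $p+1$. Because $p$ is inert, the reduction $\bar\tau$ of $\tau$ does not lie in $\mathbb F_p$, so $\{1,\bar\tau\}$ is an $\mathbb F_p$-basis of $\mathbb F_{p^2}$; as $j$ runs through $0,\dots,p-1$ and $c\in\mathbb F_p^*$, the reductions $\overline{a+cj+\tau}$ run through $\{t+\bar\tau : t\in\mathbb F_p\}$. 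I would then check that $\{\overline{a+cj+\tau}\}_j\cup\{\bar1\}$ fall into pairwise distinct classes modulo $\mathbb F_p^*$: if $(t_1+\bar\tau)/(t_2+\bar\tau)\in\mathbb F_p^*$ then $\mathbb F_p$-linear independence of $1,\bar\tau$ forces $t_1=t_2$, and no $t+\bar\tau$ lies in $\mathbb F_p$. A set of $p+1$ distinct classes in a group of order $p+1$ is a complete set of representatives, which gives (b). (That each $a+cj+\tau$ is genuinely a $p$-adic unit is the nonvanishing of $(a+cj)^2-D$, or of its analogue when $D\equiv 1\pmod 4$, already observed above.)

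For (c): the ideles supported at $p$ with $p$-component in $\OO_{K_p}^*=\OO_{c,p}^*$ lie in $\prod_{v\nmid N}\OO_{c,v}^*\prod_i(1+v_i^{n_i})$, so the Artin map carries them into $\Gal(L_{\OO_{cp},N}/L_{\OO_c,N})$ and kills $\OO_{cp,p}^*$, inducing a homomorphism $\OO_{c,p}^*/\OO_{cp,p}^*\to\Gal(L_{\OO_{cp},N}/L_{\OO_c,N})$. This is surjective because $\OO_{c,v}^*=\OO_{cp,v}^*$ for every $v\neq p$ (as $p\nmid c$), so any idele representing a class in $\Gal(L_{\OO_{cp},N}/L_{\OO_c,N})$ can be modified, without changing its image, to one supported at $p$. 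Since $[s_j,K]|_{L_{\OO_{cp},N}}$ is the image of the class of $a+cj+\tau$ and the trivial idele gives $\mathrm{id}$, part (b) then shows that $\{[s_j,K]|_{L_{\OO_{cp},N}}\}_j\cup\{\mathrm{id}\}$ is all of $\Gal(L_{\OO_{cp},N}/L_{\OO_c,N})$. The main obstacle is part (b) — correctly identifying $\OO_{cp,p}$ and its units and carrying out the $\mathbb F_{p^2}/\mathbb F_p$ linear algebra that proves the listed $p+1$ elements exhaust $\OO_{c,p}^*/\OO_{cp,p}^*$; (a) and (c) then follow formally from the construction in Section~\ref{Berlin} and from class field theory.
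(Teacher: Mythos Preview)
Your proposal is correct and follows essentially the same approach as the paper: part~(a) by the locality of $s_j$ away from $p$ and the support of $\tfrac1N$ at primes dividing $N$, part~(b) by showing the $p+1$ elements are distinct in a quotient of order $p+1$, and part~(c) from the class-field-theory identification $\Gal(L_{\OO_{cp},N}/L_{\OO_c,N})\cong \OO_{c,p}^*/\OO_{cp,p}^*$. Your reduction-mod-$p$ argument for~(b), passing to $\mathbb F_{p^2}^*/\mathbb F_p^*$ and using the $\mathbb F_p$-linear independence of $1,\bar\tau$, is a clean way to execute what the paper simply calls ``brute force.''
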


\vspace{3mm}

\begin{proof}
\begin{enumerate}[(a)]
\item Simply because for every $v|N$, $v$-entry of $s_j$ is 1, and for every $l\nmid N$, $\frac 1N \in \Z_l$.

\item By brute force, show the elements of the lefthand side are all distinct modulo $(\Zp+p\OO_{K, p})^*$. Since the righthand side has $p+1$ elements, we obtain our claim.

\item Because $\Gal( L_{\OO_{cp}, N} / L_{\OO_c, N}) \cong \OO_{c,p}^*/ \OO_{cp, p}^*$.

\end{enumerate}
\end{proof}

%%%%%%%%%%%%%%%%%%%%%%%%%%%

Now, choose $\sigma_j \in\operatorname{Aut}(\C)$ so that $\sigma_j|_{K_{ab}}=[s_j, K]$ for $j=0,1,\cdots, p-1$.

Recall

\[ P_{\tau'}=(b(\tau'), c(\tau'))\]
where

\[b(\tau')=-\displaystyle \frac{(\wp( \ds \frac 1N; \Lambda_{\tau'})-\wp( \ds \frac 2N; \Lambda_{\tau'}))^3}{\wp'(\ds \frac 1N; \Lambda_{\tau'})^2}, \]
\[ c(\tau')=\displaystyle - \frac{\wp'(\ds \frac2N; \Lambda_{\tau'})}{\wp'(\ds \frac 1N;  \Lambda_{\tau'})}. \]

As noted earlier,

\[ T_p P_{\tau'}= \sum_{j=0}^{p-1} P_{\frac{\tau'+j}p}+P_{p\tau'} \]
as divisors on $X_1(N)$.

Suppose $E_{p\tau'}: y^2=4x^3-g_2x-g_3$ with $g_2=g_2((\ds \frac 1p, \tau')), g_3=g_3((\ds \frac 1p, \tau'))$, and $\varphi$ be the following analytic isomorphism:

\begin{eqnarray*} \C/(\ds \frac 1p, \tau') &\to & E_{p\tau'}		\\
z &\mapsto&	\left(\wp(z; (\ds \frac 1p, \tau')), \wp'(z; (\ds \frac 1p, \tau')) \right).
\end{eqnarray*}

By Theorem~\ref{MT}, there is an analytic isomorphism $\psi: \C/s_j \cdot (\ds \frac 1p, \tau') \to E_{p\tau'}^{\sigma_j^{-1}}$ so that the following diagram is commutative:

\begin{eqnarray*}
K/(\ds \frac 1p, \tau')	& \stackrel{\varphi}\longrightarrow		& (E_{p\tau'})_{tor}		\\
\times s_j \downarrow \quad &			& \quad \downarrow \sigma_j^{-1}			\\
K/s_j \cdot (\ds \frac 1p, \tau')	& \stackrel{\psi}\longrightarrow		& (E_{p\tau'}^{\sigma_j^{-1}})_{tor}
\end{eqnarray*}

Similar to the argument in Section~\ref{Berlin}, there is some $x \in \C^*$ so that $\psi$ is the composite map

\[ \psi: \C/s_j \cdot (\ds \frac 1p, \tau') \stackrel{\times x}\longrightarrow \C/x \cdot s_j \cdot (\ds \frac 1p, \tau') \stackrel{(\wp, \wp')}  \longrightarrow   E_{p\tau'}^{\sigma_j^{-1}}.\]

Recall $s_j \cdot (\ds \frac 1p, \tau')=\Lambda_{ \frac{\tau'+j}p}$ (see the discussion before Lemma~\ref{Sushi}), and $s_j\cdot \ds \frac 1N=\frac 1N$ (Lemma~\ref{Sushi}~(a)). By substituting these into $b(\cdot)$ and $c(\cdot)$, we have

\begin{eqnarray*}			
b(\ds \frac{\tau'+j}p)		=	 -\displaystyle \frac{(\wp(s_j\cdot  \ds \frac 1N; s_j \cdot (\ds \frac 1p, \tau'))-\wp(s_j\cdot  \ds \frac 2N; s_j \cdot (\ds \frac 1p, \tau') ))^3}{\wp'(s_j\cdot \ds \frac 1N; s_j \cdot (\ds \frac 1p, \tau') )^2}
\end{eqnarray*}
which is equal to
\[			
-\displaystyle \frac{(\wp(x \cdot s_j\cdot  \ds \frac 1N; x \cdot s_j \cdot (\ds \frac 1p, \tau'))-\wp(x \cdot s_j\cdot  \ds \frac 2N; x \cdot s_j \cdot (\ds \frac 1p, \tau') ))^3}{\wp'(x \cdot s_j\cdot \ds \frac 1N; x \cdot s_j \cdot (\ds \frac 1p, \tau') )^2}
\]
because $\wp(x z; x\Lambda)=x^{-2} \wp(z;\Lambda)$, $\wp'(xz; x\Lambda)=x^{-3} \wp'(z; \Lambda)$. By the above commutative diagram, this is equal to

\begin{eqnarray*}			
\left[ -\displaystyle \frac{(\wp(\ds \frac 1N;  (\ds \frac 1p, \tau'))-\wp( \ds \frac 2N; (\ds \frac 1p, \tau') ))^3}{\wp'(\ds \frac 1N; (\ds \frac 1p, \tau') )^2}	\right]^{\sigma_j^{-1}}		
&=& \left[ -\displaystyle \frac{(\wp(\ds \frac pN;  \Lambda_{p\tau'})-\wp( \ds \frac {2p}N; \Lambda_{p\tau'} ))^3}{\wp'(\ds \frac pN; \Lambda_{p\tau'} )^2}	\right]^{\sigma_j^{-1}}		\\
&=& b(p\tau')^{\sigma_j^{-1}}.
\end{eqnarray*}
(The last equality is because $p\equiv 1\pmod N$.)

Similarly, $c(\ds \frac{\tau'+j}p)=c(p\tau')^{\sigma_j^{-1}}$.

Thus, $P_{p\tau'}^{\sigma_j^{-1}}=P_{ \frac{\tau'+j}p}$, which shows 

\begin{eqnarray}		\Label{Valencia}
 T_p(P_{\tau'})=\sum_{j=0}^{p-1} (P_{p\tau'})^{\sigma_j^{-1}}+(P_{p\tau'})=\Tr_{ L_{\OO_{cp},N} / L_{\OO_{c},N} } (P_{p\tau'})
\end{eqnarray}
(as divisors) by Lemma~\ref{Sushi} and by the assumption that $\sigma_j|_{L_{\OO_{cp},N}}= [s_j, L_{\OO_{cp},N}/K]$. Since $T_p$ acts as multiplication by $a_p(f)$ on the abelian variety $A_f$, and since $\Gal( L_{\OO_{cp},N} / L_{\OO_{c},N} )$ acts transitively on $\left\{ P_{ \frac{\tau'+j}p} \right\} \cup \{ P_{p\tau'} \}$, we obtain Theorem~\ref{Spain}.
\end{proof}

%%%%%%%%%%%%%%%%%%%%%%%%%%

\end{subsubsection}

\vspace{2mm}

\begin{subsubsection}{Proof when $p$ divides the conductor}

\vspace{3mm}

\begin{theorem}		\Label{Spanish March}
Suppose $p$ is a prime number such that $p|c$ and $p\nmid N  \operatorname{disc}(K/\Q)$. Then,

\[ \Tr_{ L_{\OO_{cp},N}/ L_{\OO_c, N} } P_{\tau'/p} = a_p(f) P_{\tau'}-\epsilon(p)P_{p\tau'}. \]
($P_{\tau'/p}$ can be replaced by $P_{(\tau'+j)/p}$ for any $j\in \Z$) where $\epsilon$ is the (Nebentypus) character of $f$. 
\end{theorem}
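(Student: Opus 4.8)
The plan is to mirror the proof of Theorem~\ref{Spain} as closely as possible, tracking the one essential difference: when $p \mid c$, the $p$-component of $\tau'$ is no longer a $\Zp$-basis issue in the same way, and the Hecke operator $T_p$ must be replaced by $U_p$ (since $p \mid N$ is \emph{not} the case here, but $p \mid c$ means the level-structure behaviour at $p$ interacts with the order $\OO_c$). Concretely, I would start from the decomposition
\[ T_p\left(\C/\Lambda_{\tau'}, \tfrac1N\right) = \sum_{j=0}^{p-1}\left(\C/\Lambda_{\frac{\tau'+j}{p}}, \tfrac1N\right) + \left(\C/(\tfrac1p, \tau'), \tfrac1N\right), \]
and analyze the lattices $\Zp(1, \frac{\tau'+j}{p})$ and $\Zp(\frac1p, \tau')$ for $\tau' = (a+\tau)/c$ with $p \mid c$. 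Writing $c = p^m c_0$ with $(c_0, p)=1$, the computation $\Zp(1, \frac{\tau'+j}{p}) = (a+cj+\tau)\cdot \Zp\!\left(\frac{a+cj+\bar\tau}{N_{K/\Q}(a+cj+\tau)}, \frac1p\right)$ goes through as before; the crucial point is that now, because $p\mid c$, we have $a + cj \equiv a \pmod p$, so the residue of $a+cj+\bar\tau$ modulo $p$ does not depend on $j$, and $p$ being inert (it is unramified and — I would need to check — inert, which should follow from $p \nmid \mathrm{disc}(K/\Q)$ together with the structure of $\OO_c$ at $p$) controls invertibility. I expect exactly one of the $p+1$ summands to behave differently from the other $p$: the term $\C/(\frac1p,\tau')$ together with all but one of the $\C/\Lambda_{(\tau'+j)/p}$ will form a single Galois orbit under $\Gal(L_{\OO_{cp},N}/L_{\OO_c,N})$, while the remaining summand will be $P_{p\tau'}$ twisted by the Nebentypus.

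Next I would set up the idele-theoretic bookkeeping exactly as in Lemma~\ref{Sushi}: for each $j$ choose $s_j \in \mathbb A_K^*$ supported at $p$ with $p$-component $a+cj+\tau$, verify that $s_j$ lies in the appropriate open subgroup so that $[s_j, K]$ fixes $L_{\OO_c, N}$, and then determine the image of $\{s_j\} \cup \{1\}$ in $\OO_{c,p}^*/\OO_{cp,p}^*$. The new feature is that $\OO_{c,p} = \Zp + p^m\OO_{K_p}$ is now a \emph{non-maximal} order at $p$, so $\OO_{c,p}^*/\OO_{cp,p}^*$ has a different structure than in Theorem~\ref{Spain}; in particular the index is again $p$ (or $p+1$?) — I would compute $[\OO_{c,p}^* : \OO_{cp,p}^*]$ carefully, and this is where the term $-\epsilon(p)P_{p\tau'}$ enters. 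The point is that one of the $p+1$ geometric summands is \emph{not} in the $L_{\OO_c,N}$-orbit of the others but is instead a Galois-conjugate of $P_{p\tau'}$, and the Nebentypus $\epsilon(p)$ appears because the diamond operator $\langle p \rangle$ acts on $A_f$ by $\epsilon(p)$ and is precisely the obstruction measuring how $\frac1N$ transforms under the idele whose $p$-component realizes the transition from the sublattice back to $\Lambda_{p\tau'}$ (the level-$N$ structure is not preserved on the nose, it is multiplied by a unit congruent to $p \equiv$ something $\pmod N$; tracking this congruence is what produces $\epsilon(p)$ rather than $1$).

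Then I would translate the divisor identity into $A_f$: using that $U_p$ (or $T_p$ restricted to $A_f$, with the understanding that $p \nmid N$ so $T_p$ is the right operator and $T_p = U_p + \epsilon(p)\langle p \rangle \cdot (\text{Verschiebung})$ at the level of Hecke correspondences on $X_1(N)$) acts as multiplication by $a_p(f)$ on $A_f$, and that $\langle p \rangle$ acts as $\epsilon(p)$, the relation
\[ a_p(f) P_{\tau'} = \Tr_{L_{\OO_{cp},N}/L_{\OO_c,N}} P_{\tau'/p} + \epsilon(p) P_{p\tau'} \]
falls out after rearranging. I would then note, as in Theorem~\ref{Spain}, that $P_{\tau'/p}$ may be replaced by any $P_{(\tau'+j)/p}$ because $\Gal(L_{\OO_{cp},N}/L_{\OO_c,N})$ permutes them transitively.

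The main obstacle, I expect, is the bookkeeping at $p$ for the non-maximal order: correctly identifying which of the $p+1$ summands degenerates to a $P_{p\tau'}$-type term, computing the index $[\OO_{c,p}^*:\OO_{cp,p}^*]$ when $\OO_{c,p}$ is non-maximal at $p$, and — most delicately — pinning down \emph{why} the Nebentypus $\epsilon(p)$ (and not some other root of unity or just $1$) is the coefficient. This last point requires carefully tracking the level-$N$ point $\frac1N$ through the chain of analytic isomorphisms and idele multiplications, exactly as in Proposition~\ref{Vienna}, and observing that the relevant idele has a nontrivial class in $(\Z/N\Z)^*$ realizing the arithmetic Frobenius, so that its action on the modular function is via the diamond operator $\langle p \rangle$, hence via $\epsilon(p)$ on $A_f$. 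I anticipate the rest — the lattice computations in Cases~1 and~2 split by $D \bmod 4$, and the $\wp$-homogeneity substitutions — to be routine repetitions of the arguments already given for Theorem~\ref{Spain}.
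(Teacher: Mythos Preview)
Your overall strategy (Hecke decomposition, idelic bookkeeping, Main Theorem of CM, then pass to $A_f$) is exactly what the paper does, but your implementation has a concrete wrong turn that would block the argument.

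The gap is in identifying which summand is ``special'' and in your choice of $s_j$. You propose to take $s_j$ with $p$-component $a+cj+\tau$, as in Theorem~\ref{Spain}, and you expect the term $(\C/(\tfrac1p,\tau'),\tfrac1N)$ to sit inside the Galois orbit while one of the $(\tau'+j)/p$ terms drops out. It is the other way around. Since $p\mid c$, the elements $a+cj+\tau$ for $j=0,\dots,p-1$ are all congruent to $a+\tau$ modulo $p\OO_{K_p}$, so your proposed $s_j$'s all represent the \emph{same} class in the relevant unit quotient and cannot give $p$ distinct Galois elements. Moreover, $[L_{\OO_{cp},N}:L_{\OO_c,N}]=p$ here (not $p+1$): when $p\mid c$ one has $(\OO_c\otimes\Zp)^*/(\OO_{cp}\otimes\Zp)^*\cong\Z/p\Z$. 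So exactly the $p$ terms $\{P_{(\tau'+j)/p}\}_{j=0}^{p-1}$ form the full Galois orbit, and the extra summand $(\C/(\tfrac1p,\tau'),\tfrac1N)$ is the one that stands apart.

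The paper's fix is to take $s_j$ with $p$-component $\dfrac{\tau'+j}{\tau'}=\dfrac{a+cj+\tau}{a+\tau}$. A short computation (split by $D\bmod 4$) shows this lies in $1+c(\OO_K\otimes\Zp)\subset(\OO_c\otimes\Zp)^*$ and that $s_j\cdot\Lambda_{\tau'/p}=\Lambda_{(\tau'+j)/p}$; these $p$ elements exhaust $(\OO_c\otimes\Zp)^*/(\OO_{cp}\otimes\Zp)^*$. The source of $\epsilon(p)$ is then much simpler than you suggest: the leftover term is $(\C/(1,p\tau'),\tfrac pN)=\langle p\rangle(\C/(1,p\tau'),\tfrac1N)=\langle p\rangle P_{p\tau'}$ directly from the Hecke decomposition (no idele-tracking of $\tfrac1N$ is needed for this---in fact $s_j\cdot\tfrac1N=\tfrac1N$ here too). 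Since $\langle p\rangle$ acts by $\epsilon(p)$ on $A_f$, rearranging gives the claim. Finally, note the theorem does \emph{not} require $p$ inert; only $p\nmid N\operatorname{disc}(K/\Q)$ is assumed, and the argument works uniformly.
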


\vspace{2mm}

\begin{proof}

Similar to Theorem~\ref{Spain}, we have the following equality of divisors:

\begin{eqnarray*}
T_p \left(\C/\Lambda_{\tau'}, \ds \frac 1N \right)	
&=&		\sum_{j=0}^{p-1} \left(\C/\Lambda_{\frac{\tau'+j}p}, \ds \frac 1N \right) + \left(\C/( 1, p \tau'), \ds \frac pN \right) 	\\
&=&		\sum_{j=0}^{p-1} \left(\C/\Lambda_{\frac{\tau'+j}p}, \ds \frac 1N \right) + \langle p \rangle \left(\C/( 1, p \tau'), \ds \frac 1N \right) 
\end{eqnarray*}
where $\langle \cdot \rangle$ is the diamond operator (for the action of $\langle \cdot \rangle$, see \cite{Wiles-1}~Section~2).

Similar to Theorem~\ref{Spain}, for any prime $l (\not=p)$ and integer $j$,

\[ \Z_l(1, \ds \frac{\tau'+j}p )  = \Z_l (1, \tau')=\Z_l (1, \ds \frac{\tau'}p).\]

When $l=p$, we note

\begin{eqnarray*}
\Zp(1, \ds \frac{\tau'+j}p )		&=&	\Zp(1, \ds \frac{ a+cj+\tau}{cp}).
\end{eqnarray*}

Now, we have

\begin{eqnarray*}
\ds \frac{\ds \frac{\tau'+j}p}{\ds \frac{\tau'}p}		&=& \frac{(a+cj)+\tau}{a+\tau}		\\
&=& \ds \frac{[(a+cj)+\tau][a+\bar\tau]}{N_{K/\Q} (a+\tau)}.
\end{eqnarray*}

We consider $\frac{ \frac{\tau'+j}p}{ \frac{\tau'}p}$ to be an element of $K \otimes \Qp$ (and in an appropriate context, the $p$-component of the adele $\mathbb A_K^*$). In other words, if $p$ is inert, it is an element of $K_p$, and if $p$ splits completely so that $p\OO_K=\mfp\barmfp$, an element of $K_{\mfp} \times K_{\barmfp}$.
Similarly, $\Zp (1,   \frac{\tau'+j}p )$ is considered a lattice inside $K\otimes \Qp$.

\vspace{3mm}

%%%%%%%
\textit{Case 1.} $D\not\equiv1 \pmod 4$.

By simple computation,

\begin{eqnarray*} 
\ds \frac{ \frac{\tau'+j}p}{ \frac{\tau'}p}	&=& \ds \frac{[(a+cj)+\tau][a+\bar\tau]}{N_{K/\Q} (a+\tau)}		\\
&=&		\ds \frac{(a^2-D+acj)-cj\tau}{a^2-D}		\\
&=&		1+\ds \frac{ac}{a^2-D}j- \frac{c}{a^2-D}j\tau		\\
&=&		1+\ds \frac{ac}{a^2-D}j- \frac{c}{a^2-D}j (c\tau'-a)		\\
&=&		1+\ds \frac{2ac}{a^2-D}j - \frac{c^2}{a^2-D}j \tau'
\end{eqnarray*}

Noting $a^2-D \not\equiv 0 \pmod p$, it follows that

\begin{eqnarray*} 
\ds \frac{ \frac{\tau'+j}p}{ \frac{\tau'}p}	 \Zp (1, \frac{\tau'}p ) &=&	\Zp(1+\ds \frac{2ac}{a^2-D}j - \frac{c^2}{a^2-D}j \tau', \frac{\tau'+j}p )		\\
&=&\Zp(1 +\ds \frac{2ac}{a^2-D}j - \frac{c^2}{a^2-D}j \tau' + (\frac{c^2p}{a^2-D}j) \cdot (\frac{\tau'+j}p), \frac{\tau'+j}p )		\\
&=&\Zp(1 +\ds \frac{2ac}{a^2-D}j + \ds\frac{c^2 j^2}{a^2-D}, \frac{\tau'+j}p )		\\
&=&\Zp(1, \frac{\tau'+j}p )
\end{eqnarray*}
(the last line because $p|c$ and $p\nmid a^2-D$).

Noting $\ds\frac{\tau'+j}{\tau'}=1+\ds \frac{ac}{a^2-D}j- \frac{c}{a^2-D}j\tau$ as shown above, what we have shown is equivalent to

\[ (1+\ds \frac{ac}{a^2-D}j- \frac{c}{a^2-D}j\tau) \Zp (1, \frac{\tau'}p )= \Zp(1, \frac{\tau'+j}p ).\]
\vspace{3mm}

%%%%%%%

\textit{Case 2.} $D \equiv1 \pmod 4$.

Note
\[ N_{K/\Q} (a+\tau)=a^2+a+\ds \frac{1-D}4 \not\equiv 0 \pmod p. \]

\begin{eqnarray*} 
[(a+cj)+\tau][a+\bar\tau]&=& a(a+cj)+\ds \frac{1-D}4 + a\tau+(a+cj)\bar\tau	\\
&=&a^2+acj+\ds \frac{1-D}4 +a\tau+(a+cj)(1-\tau)		\\
&=&	(a^2+acj+\ds \frac{1-D}4+a+cj)-cj\tau
\end{eqnarray*}

So, similar to Case 1,

\begin{eqnarray*} 
\ds \frac{ \frac{\tau'+j}p}{ \frac{\tau'}p} &=& \frac{(a^2+acj+ \frac{1-D}4+a+cj)-cj\tau}{a^2+a+ \frac{1-D}4}	\\
&=&	1+ \ds \frac {acj+cj}{a^2+a+ \frac{1-D}4} - \frac{cj}{a^2+a+ \frac{1-D}4} \tau		\\
&=&	1+ \ds \frac {acj+cj}{a^2+a+ \frac{1-D}4} - \frac{cj}{a^2+a+ \frac{1-D}4} (c\tau'-a)		\\
&=&		1+ \ds \frac {2acj+cj}{a^2+a+ \frac{1-D}4} - \frac{c^2j}{a^2+a+ \frac{1-D}4} \tau'	,	\\
\end{eqnarray*}
thus

\begin{eqnarray*} 
\ds \frac{{\tau'+j}}{{\tau'}}	\cdot \Zp (1, \frac{\tau'}p ) &=&	\Zp(1+ \ds \frac {2acj+cj}{a^2+a+ \frac{1-D}4} - \frac{c^2j}{a^2+a+ \frac{1-D}4} \tau', \frac{\tau'+j}p )		\\
&=&\Zp(1+\ds \frac{2acj+cj+c^2j^2}{a^2+a+ \frac{1-D}4}, \frac{\tau'+j}p )		\\
&=&\Zp(1, \frac{\tau'+j}p )
\end{eqnarray*}
(the last line because $\ds \frac{2acj+cj+c^2j^2}{a^2+a+\ds \frac{1-D}4} \equiv 0 \pmod p$).

Noting $\ds \frac{{\tau'+j}}{{\tau'}}=1+ \ds \frac {acj+cj}{a^2+a+ \frac{1-D}4} - \frac{cj}{a^2+a+ \frac{1-D}4} \tau$ as shown above, what we have shown is equivalent to

\begin{eqnarray*}
\left( 1+ \ds \frac {acj+cj}{a^2+a+ \frac{1-D}4} - \frac{cj}{a^2+a+ \frac{1-D}4} \tau \right) \cdot \Zp (1, \frac{\tau'}p ) 		= \Zp(1, \frac{\tau'+j}p ) 
\end{eqnarray*}
for each $j \in \Z$.

Let $s_j=(1,1,\cdots, \ds \frac{\tau'+j}{\tau'}, 1,\cdots ) \in \mathbb A_K^*$ for $j=0,1,\cdots, p-1$ where $\frac{\tau'+j}{\tau'}$ is the $p$-component. We have shown that in both Case 1 and Case 2,

\[ s_j \Lambda_{\frac{\tau'}p}=\Lambda_{\frac{\tau'+j}p}. \]

We note that in both Case 1 and Case 2, 

\[ \frac{\tau'+j}{\tau'} \in 1+c(\OO_K\otimes \Zp) \subset (\OO_c \otimes \Zp)^*, \]
which implies $s_j \in \prod_{v\nmid N} \OO_{c,v}^* \prod_i 1+v_i^{n_i}$.

\begin{lemma}

\begin{enumerate}[(a)]
\item $s_j \ds \frac 1N=\frac 1N$ where $\ds \frac 1N$ on the left is in $K/\Lambda_{\frac{\tau'}p}$, and the one on the right in $K/\Lambda_{\frac{\tau'+j}p}$. 

\item  $\{ \ds \frac{\tau'+j}{\tau'} \}_{j=0,1,\cdots, p-1} \cong ((\OO_c)\otimes \Zp)^*/ ((\OO_{cp})\otimes \Zp)^*$. 

\item Consequently, $\{ [s_j, K]|_{L_{\OO_{cp},N}} \}_{j=0,1,\cdots, p-1} = \Gal( L_{\OO_{cp},N} / L_{\OO_{c},N} )$.
\end{enumerate}
\end{lemma}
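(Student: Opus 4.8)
These three statements are the analogues, in the case $p\mid c$, of Lemma~\ref{Sushi}, and I would prove them in the same order. Part (a) is formal and copies the argument of Lemma~\ref{Sushi}(a): the adele $s_j$ has trivial component at every place $v\mid N$, while $\frac1N\in K/\Lambda$ is supported only at the primes dividing $N$ (since $\frac1N\in\Z_\ell$ for $\ell\nmid N$); as we have already shown $s_j\Lambda_{\tau'/p}=\Lambda_{(\tau'+j)/p}$ and $\Z_\ell(1,\tau'/p)=\Z_\ell(1,(\tau'+j)/p)$ for every prime $\ell\ne p$, the isomorphism $K/\Lambda_{\tau'/p}\to K/\Lambda_{(\tau'+j)/p}$ induced by $s_j$ is the identity on each $\ell$-component with $\ell\mid N$ and kills the (zero) $p$-component of $\frac1N$, hence fixes $\frac1N$.

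For part (b) I would first rewrite, with $\alpha=a+\tau$,
\[
\frac{\tau'+j}{\tau'}=\frac{\alpha+cj}{\alpha}=1+cj\,\alpha^{-1}\qquad\text{in }K\otimes\Qp,
\]
where $\alpha^{-1}\in(\OO_K\otimes\Zp)^{*}$ because $N_{K/\Q}(\alpha)$ is prime to $p$, and where a one-line check in each of the two cases defining $\tau$ shows that the $\tau$-coordinate of $\alpha^{-1}$, in the $\Zp$-basis $\{1,\tau\}$ of $\OO_K\otimes\Zp=\Zp[\tau]$, is a unit of $\Zp$. This re-confirms the already-noted inclusion $\frac{\tau'+j}{\tau'}\in 1+c(\OO_K\otimes\Zp)\subset(\OO_c\otimes\Zp)^{*}$. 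Writing $e=v_p(c)\ge1$ and $c=p^{e}u$ with $u\in\Zp^{*}$, one has $\OO_c\otimes\Zp=\Zp+p^{e}(\OO_K\otimes\Zp)$, $\OO_{cp}\otimes\Zp=\Zp+p^{e+1}(\OO_K\otimes\Zp)$, and $(\OO_{cp}\otimes\Zp)^{*}=\Zp^{*}+p^{e+1}(\OO_K\otimes\Zp)$. The plan is then: (i) for $j\ne j'$ in $\{0,\dots,p-1\}$, to show that $\frac{\tau'+j}{\tau'}\big(\frac{\tau'+j'}{\tau'}\big)^{-1}$ is congruent modulo $p^{e+1}(\OO_K\otimes\Zp)$ to $1+p^{e}u(j-j')\alpha^{-1}$ (the remaining terms lie in $c^{2}(\OO_K\otimes\Zp)\subset p^{e+1}(\OO_K\otimes\Zp)$), whose $\tau$-coordinate has $p$-valuation exactly $e$ since $j-j'$ is prime to $p$; hence this ratio is not in $(\OO_{cp}\otimes\Zp)^{*}$, so the $p$ classes $\frac{\tau'+j}{\tau'}$ are pairwise distinct in $(\OO_c\otimes\Zp)^{*}/(\OO_{cp}\otimes\Zp)^{*}$; and (ii) to check that $\beta\mapsto 1+p^{e}\beta$ induces a surjection of $\OO_K\otimes\Zp$ onto $(\OO_c\otimes\Zp)^{*}/(\OO_{cp}\otimes\Zp)^{*}$ with kernel $\Zp+p(\OO_K\otimes\Zp)$, so that this quotient has order exactly $p$. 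Combining (i) and (ii), the $p$ distinct classes exhaust the quotient, which is precisely part (b).

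Part (c) is then formal: each $s_j$ lies in $\prod_{v\nmid N}\OO_{c,v}^{*}\prod_i(1+v_i^{n_i})$, so $[s_j,K]$ is trivial on $L_{\OO_c,N}$ by the idelic description recalled in Section~\ref{Field}; and since $\OO_{c,v}=\OO_{cp,v}$ for all $v\nmid p$, restriction identifies $\Gal(L_{\OO_{cp},N}/L_{\OO_c,N})$ with $(\OO_c\otimes\Zp)^{*}/(\OO_{cp}\otimes\Zp)^{*}$, sending $[s_j,K]|_{L_{\OO_{cp},N}}$ to the class of its $p$-component $\frac{\tau'+j}{\tau'}$; part (b) then yields the asserted bijection.

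The only genuine obstacle is part (b): one must pin down that $[(\OO_c\otimes\Zp)^{*}:(\OO_{cp}\otimes\Zp)^{*}]=p$ — with no extra Euler factor, precisely because $p\mid c$ — and that the elements $\frac{\tau'+j}{\tau'}$, although all lying in $1+c(\OO_K\otimes\Zp)$, nevertheless remain distinct modulo the smaller unit group; the latter reduces to the $\tau$-coordinate of $c(j-j')\alpha^{-1}$ having $p$-valuation exactly $v_p(c)$. Once (b) holds, (a) and (c) are routine, and the lemma feeds into the proof of Theorem~\ref{Spanish March} exactly as Lemma~\ref{Sushi} does into Theorem~\ref{Spain}: (a) permits substituting $s_j$ into the formulas for $b$ and $c$ to identify each $P_{(\tau'+j)/p}$ with a Galois conjugate of $P_{\tau'/p}$, and (c) says these conjugates run over the full orbit of $\Gal(L_{\OO_{cp},N}/L_{\OO_c,N})$, so that $\sum_{j=0}^{p-1}P_{(\tau'+j)/p}=\Tr_{L_{\OO_{cp},N}/L_{\OO_c,N}}P_{\tau'/p}$.
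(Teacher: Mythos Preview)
Your proposal is correct and follows essentially the same strategy as the paper. The paper disposes of (a) and (c) by pointing to Lemma~\ref{Sushi}, exactly as you do, and for (b) it writes $p^n\parallel c$ and the elements as $1+Ap^nj+Bp^nj\tau$ with $B\in\Zp^{*}$ (which is your $1+cj\alpha^{-1}$ expressed in the basis $\{1,\tau\}$), then asserts they are pairwise distinct modulo $(\OO_{p^{n+1}}\otimes\Zp)^{*}$ and concludes by counting that both sides have $p$ elements; your version simply fills in the distinctness argument via the ratio and makes the index computation explicit.
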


\vspace{3mm}

\begin{proof}
Similar to Lemma~\ref{Sushi} except for (b). As for (b), first suppose $p^n \parallel c$. 
We show that for any $A,B\in \Zp$ ($(B,p)=1$), $1+Ap^nj+Bp^nj\tau$ for $j=0,1,\cdots, p-1$ are all distinct modulo $\left( \OO_{p^{n+1}} \otimes \Zp \right)^*$, and thus $\{ 1+Ap^n j+Bp^n j\tau \}_{j=0,1,\cdots, p-1}=(\OO_{p^n} \otimes \Zp)^*/(\OO_{p^{n+1}} \otimes \Zp)^*$ by noting that both sides have the same number of elements.
\end{proof}

%%%%%%%%%%%%%%%%%%%%%%%%%%%

Now, choose $\sigma_j \in\operatorname{Aut}(\C)$ so that $\sigma_j|_{K_{ab}}=[s_j, K]$ for $j=0,1,\cdots, p-1$.

Similar to Theorem~\ref{Spain}, 

\begin{eqnarray*}			
b(\ds \frac{\tau'+j}p)	&=&	 b(\ds \frac{\tau'}p)^{\sigma_j^{-1}}		\\
c(\ds \frac{\tau'+j}p) &=&c(\ds \frac{\tau'}p)^{\sigma_j^{-1}}
\end{eqnarray*}

Thus, $P_{ \frac{\tau'}p}^{\sigma_j^{-1}}=P_{ \frac{\tau'+j}p}$, which shows 

\[ T_p(P_{\tau'})=\sum_{j=0}^{p-1} (P_{\frac{\tau'}p})^{\sigma_j^{-1}}+\langle p \rangle (P_{p\tau'})=\Tr_{ L_{\OO_{cp},N} / L_{\OO_{c},N}} (P_{\frac{\tau'}p})+ \langle p \rangle (P_{p\tau'}).\]
Since $T_p$ acts as multiplication by $a_p(f)$ on the abelian variety $A_f$, $\langle p \rangle$ as multiplication by $\epsilon(p)$, and $\Gal( L_{\OO_{cp},N} / L_{\OO_{c},N} )$ acts transitively on $\{ P_{ \frac{\tau'+j}p} \}_{j=0,\cdots, p-1}$, we obtain Theorem~\ref{Spanish March}.
\end{proof}

\end{subsubsection}

\end{subsection}

%%%%%%%%%%%%%%%%%%%%%%%
%%%%%%%%%%%%%%%%%%%%%%%%%

\begin{subsection}{Congruence Relations}		\Label{Congruence Relations}

\hfill

Where $\mathcal L$ is a local field of residue characteristic $p$ which is prime to $N$, $\nu$ is the maximal ideal of $\OO_{\mathcal L}$, $\mathbb F$ is $\OO_{\mathcal L}/\nu$, and $\mathcal A$ is the N\'eron model of $A_f$ over $\Zp$ (which exists because $(p,N)=1$), we note that there is the standard reduction map $\operatorname{red}_\nu: A_f(\mathcal L) \to \mathcal A_{/\mathbb F_p}(\mathbb F)$. Also let $J_{/\mathbb F_p}$ denote the fiber of the N\'eron model of $J_1(N)_{/\Q}$ over $\Zp$ (which exists because 
$(p,N)=1$).

Recall that $\tau=\sqrt D$ if $D \not\equiv 1 \pmod 4$, and $(\sqrt D+1)/2$ if $D\equiv 1 \pmod 4$, and $\tau'=\ds \frac{a+\tau}c$ with $(c,N)=1$.

\vspace{2mm}

\begin{theorem}		\Label{Congruence}
Suppose $p$ is a prime that is inert over $K/\Q$, $p \equiv 1 \pmod N$, and $(p, c)=1$. Let $\lambda$ be any prime of $L_{\OO_c, N}$ lying above $p$, and $\lambda'$ be any prime of $L_{\OO_{cp}, N}$ lying above $\lambda$. Then, we have

\[ (p+1)\operatorname{red}_{\lambda'} P_{\tau'/p}= (\operatorname{Frob}_{p}+p \cdot \epsilon(p) \cdot \operatorname{Frob}_{p}^{-1}) \operatorname{red}_{\lambda} P_{\tau'} = a_p(f) \operatorname{red}_{\lambda} P_{\tau'}. \]

\end{theorem}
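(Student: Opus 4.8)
The plan is to deduce the congruence relation from the distribution relation of Theorem~\ref{Spain} together with the Eichler--Shimura congruence for $T_p$ on $J_1(N)$ over $\mathbb F_p$. First I would recall that since $p$ is inert over $K/\Q$, the prime $\lambda$ of $L_{\OO_c,N}$ above $p$ has residue field $\mathbb F_{p^2}$, and similarly for $\lambda'$; in particular $\operatorname{Frob}_p$ (the Frobenius of $p$ in $\Gal(L_{\OO_c,N}/K)$, equivalently the $p^2$-power Frobenius on the residue field, or rather the arithmetic Frobenius generating the relevant decomposition group) acts on $\operatorname{red}_\lambda P_{\tau'}$. The key input is that $A_f$ has good reduction at $p$ (the N\'eron model $\mathcal A$ over $\Z_p$ is abelian), so reduction commutes with the Hecke action and with the Galois action, and $\operatorname{red}_\lambda$ is injective on prime-to-$p$ torsion; more importantly, the points $P_{\tau'/p}$ and $P_{p\tau'}$ and the $P_{(\tau'+j)/p}$ all reduce, modulo $\lambda'$, to points that I can identify via the good reduction of $X_1(N)$ at $p$.

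The main step is to analyze the reduction modulo $\lambda'$ of the CM points $(\C/\Lambda_{\tau'/p},1/N)$, $(\C/\Lambda_{(\tau'+j)/p},1/N)$ and $(\C/\Lambda_{p\tau'},1/N)$ appearing in the formula $T_p(P_{\tau'}) = \sum_{j=0}^{p-1} P_{(\tau'+j)/p} + P_{p\tau'}$ established in the proof of Theorem~\ref{Spain}. Over $\mathbb F_{p^2}$ the curve $X_1(N)$ has good reduction (as $p\nmid N$), and the Eichler--Shimura relation says $T_p \equiv \operatorname{Frob}_p + \langle p\rangle \operatorname{Frob}_p^{*}$ on $J_{/\mathbb F_p}$, where $\operatorname{Frob}_p^{*}$ is the transpose (Verschiebung-type) correspondence and $\langle p\rangle$ is the diamond operator reducing to $\epsilon(p)$ on $A_f$. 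Since $p$ is inert, the elliptic curve $\C/\Lambda_{\tau'}$ has supersingular reduction at $\lambda$, so its reduction has a unique subgroup of order $p$ (the kernel of Frobenius, which equals the kernel of Verschiebung up to the $p^2$-Frobenius); this forces all the $p+1$ isogenous curves $\C/\Lambda_{(\tau'+j)/p}$ and $\C/\Lambda_{p\tau'}$ to have the \emph{same} reduction modulo $\lambda'$, namely the image under Frobenius (or equivalently Verschiebung, since on a supersingular curve these differ by an automorphism absorbed into the identification). Concretely, $\operatorname{red}_{\lambda'}(P_{(\tau'+j)/p}) = \operatorname{Frob}_p(\operatorname{red}_\lambda P_{\tau'})$ for each $j$, and $\operatorname{red}_{\lambda'}(P_{p\tau'}) = \epsilon(p)\operatorname{Frob}_p^{-1}\cdot p \cdot (\text{something})$ — more precisely the Verschiebung term contributes $p\cdot\epsilon(p)\operatorname{Frob}_p^{-1}\operatorname{red}_\lambda P_{\tau'}$ after accounting for the degree-$p$ inseparability. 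Summing the $p+1$ terms and using $T_p = a_p(f)$ on $A_f$ then yields $(p+1)\operatorname{red}_{\lambda'}P_{\tau'/p} = (\operatorname{Frob}_p + p\epsilon(p)\operatorname{Frob}_p^{-1})\operatorname{red}_\lambda P_{\tau'} = a_p(f)\operatorname{red}_\lambda P_{\tau'}$.

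I would carry this out in the following order: (1) verify good reduction of $A_f$ and of $X_1(N)$ at $p$ and that $\operatorname{red}$ is equivariant for Hecke and Galois; (2) recall that $p$ inert implies $\C/\Lambda_{\tau'}$ is supersingular at $\lambda$, hence its reduction has a unique order-$p$ subgroup; (3) identify the reductions of the CM points $P_{(\tau'+j)/p}$ with $\operatorname{Frob}_p\operatorname{red}_\lambda P_{\tau'}$ and of $P_{p\tau'}$ with the Verschiebung contribution $p\epsilon(p)\operatorname{Frob}_p^{-1}\operatorname{red}_\lambda P_{\tau'}$, using that $P_{(\tau'+j)/p}$ and $P_{p\tau'}$ are Galois-conjugate over $L_{\OO_c,N}$ (Lemma~\ref{Sushi}) so their reductions are all conjugate, combined with the Eichler--Shimura decomposition of $T_p$; (4) sum and invoke Theorem~\ref{Spain} together with $T_p = a_p(f)$ on $A_f$. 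The main obstacle I expect is step (3): correctly pinning down the factor of $p$ and the $\operatorname{Frob}_p^{-1}$ attached to the "extra" point $P_{p\tau'}$ versus the $p$ "horizontal" points $P_{(\tau'+j)/p}$ — i.e., matching the geometric picture (Frobenius isogeny versus its dual Verschiebung on a supersingular elliptic curve, where the dual isogeny is purely inseparable of degree $p$) with the Eichler--Shimura congruence $T_p \equiv F + \langle p\rangle V$ and checking that the diamond operator really produces $\epsilon(p)$ and not $\epsilon(p)^{-1}$. This bookkeeping, together with the fact that on a supersingular curve $\operatorname{Frob}_p$ and the dual of the order-$p$ isogeny act compatibly so that the $p+1$ reductions collapse to essentially one point, is where care is needed; everything else reduces to the already-proved distribution relation.
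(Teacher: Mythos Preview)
Your high-level strategy---combine the distribution relation $T_p(P_{\tau'})=\Tr_{L_{\OO_{cp},N}/L_{\OO_c,N}}(P_{\tau'/p})$ from Theorem~\ref{Spain} with the Eichler--Shimura congruence $T_p\equiv\operatorname{Frob}_p+\langle p\rangle\operatorname{Ver}_p$ on $J_{/\mathbb F_p}$---is exactly the paper's strategy. Where you diverge is in how you extract the factor $(p+1)$ on the left-hand side.

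The paper does this by a pure ramification argument: since $p\equiv 1\pmod N$ and $(p,c)=1$, the prime $(p)$ splits completely in $L_{\OO_c,N}/K$, while $\lambda'/\lambda$ is \emph{totally ramified} in $L_{\OO_{cp},N}/L_{\OO_c,N}$. Hence every $\sigma\in\Gal(L_{\OO_{cp},N}/L_{\OO_c,N})$ acts trivially on the residue field, so all $p+1$ Galois conjugates of $P_{\tau'/p}$ have the \emph{same} reduction $\operatorname{red}_{\lambda'}P_{\tau'/p}$. This immediately gives $\operatorname{red}_{\lambda}T_p(P_{\tau'})=(p+1)\operatorname{red}_{\lambda'}P_{\tau'/p}$, and the right-hand side is then computed by applying Eichler--Shimura to $T_p$ on $\mathcal A_{/\mathbb F_p}$ together with $\operatorname{Frob}_p\operatorname{Ver}_p=p$. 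No supersingularity, no moduli-theoretic analysis of individual quotient curves.

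Your step~(3), by contrast, contains a genuine confusion. You try to split the $p+1$ points so that the $p$ ``horizontal'' points $P_{(\tau'+j)/p}$ each reduce to $\operatorname{Frob}_p(\operatorname{red}_\lambda P_{\tau'})$ while the single point $P_{p\tau'}$ reduces to ``$p\cdot\epsilon(p)\operatorname{Frob}_p^{-1}(\text{something})$''. But the reduction of a single point cannot carry a multiplicity~$p$; that factor lives on the correspondence/endomorphism level (via $\operatorname{Ver}_p=p\cdot\operatorname{Frob}_p^{-1}$ on the abelian variety), not on the level of individual point reductions. Moreover, since all $p+1$ points are Galois-conjugate (Lemma~\ref{Sushi}) and the residue extension is trivial, their reductions are all \emph{equal}---so there is no asymmetry to exploit between the $p$ horizontal points and the one vertical point. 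The supersingular-reduction picture you sketch (unique order-$p$ subgroup, all quotients collapse) is a correct geometric explanation of why the reductions coincide, and is how Gross argues for $X_0(N)$; but it is more work than the paper's one-line ramification argument, and you would still need to feed the collapsed sum back into Eichler--Shimura on the $T_p$ side rather than try to read off $\operatorname{Frob}_p$ and $\operatorname{Ver}_p$ contributions point-by-point.
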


\vspace{2mm}

\begin{remark}

As we will see in the proof, $P_{\tau'/p}$ can be replaced by $P_{(\tau'+j)/p}$ for any $j \in \Z$, or $P_{p\tau'}$.

We state Theorem~\ref{Congruence} with $\operatorname{Frob}_{p}+p \cdot \epsilon(p) \cdot \operatorname{Frob}_{p}^{-1}$ to make it appear similar to Kolyvagin's congruence relation ((\ref{Kolyvagin-3}) in Section~\ref{Introduction}), but in practice, the statement with $a_p(f) \operatorname{red}_{\lambda} P_{\tau'}$ might be more suitable.

\end{remark}

\vspace{2mm}

\begin{proof}
As we see in the proof of Theorem~\ref{Spain} (see (\ref{Valencia})),

\[
T_p(P_{\tau'})=\Tr_{ L_{\OO_{cp},N} / L_{\OO_{c},N} } (P_{p\tau'}).
\]
As noted in Theorem~\ref{Spain}, $P_{p\tau'}$ can be replaced by $P_{(\tau'+j)/p}$ for any $j \in \Z$ because they are transitive under the action of $\Gal(L_{\OO_{cp},N} / L_{\OO_{c},N})$. To be consistent with the notation in the theorem, we replace $P_{p\tau'}$ with $P_{\tau'/p}$.

Our condition implies that $(p)$ splits completely over $L_{\OO_c, N}/K$, and $\lambda'$ is totally ramified over $L_{\OO_{cp}, N}/L_{\OO_c, N}$ (therefore $\OO_{L_{\OO_{cp}, N}}/\lambda' \cong \OO_{L_{\OO_c, N}}/\lambda \cong \OO_K/(p)$). Thus, every $\sigma \in \Gal(L_{\OO_{cp}, N}/L_{\OO_c, N})$ is the identity modulo $\lambda'$. Hence, 

\[ \operatorname{red}_{\lambda} T_p(P_{\tau'})=(p+1) \operatorname{red}_{\lambda'} P_{\tau'/p}.	\]

On the other hand, by the Eichler-Shimura relation,

\[ T_p = \operatorname{Frob}_p+\langle p \rangle \operatorname{Ver}_p \]
as action on $J_{/\mathbb F_p}$. Here $\operatorname{Frob}_p$ is the Frobenius and $\operatorname{Ver}_p$ is the Verschiebung of the group scheme $J_{/\mathbb F_p}$ (\cite{Wiles-1}~Section~4, (4.1)). We recall that $\langle p \rangle$ acts as multiplication by $\epsilon(p)$ as an action on $\mathcal A_{/\mathbb F_p}$.

By recalling that $\operatorname{Ver}_p$ is characterized by $\operatorname{Frob}_p \operatorname{Ver}_p=\operatorname{Ver}_p \operatorname{Frob}_p=p$, we obtain

\[ (p+1)\operatorname{red}_{\lambda'} P_{\tau'/p}= (\operatorname{Frob}_{p}+p \cdot \epsilon(p) \cdot \operatorname{Frob}_{p}^{-1}) \operatorname{red}_{\lambda} P_{\tau'}. \]
(We may consider $\operatorname{Frob}_p$ as an automorphism of finite fields.) Thus, we obtain the first part of our equation.

Then, note the standard result $\operatorname{Frob}_{p}+p \cdot \epsilon(p) \cdot \operatorname{Frob}_{p}^{-1}=a_p(f)$ on $\mathcal A_{/\mathbb F_p}$. (Or, we can simply argue that $T_p$ acts as multiplication by $a_p$ as an action on $\mathcal A_{/\mathbb F_p}$.) Thus, we also obtain the second part of the equation.
\end{proof}

\vspace{2mm}

In his original work, Kolyvagin does not make any use of the second distribution relation (\ref{Kolyvagin-2}) (see Section~\ref{Introduction}), which in fact does not seem to appear in his work. This relation is what Rubin calls the distribution relation in the $p$-direction. Instead, he uses congruence relations.

Rubin points out (\cite{Rubin-2}, and \cite{Rubin-3}~Remark~2.1.5, and also Section~4.8) that the congruence relations are often unnecessary, or can be derived from other distribution relations if ``the Euler system satisfies distribution relations in the $p$-direction'' (such as (\ref{JKK-2}) in Section~\ref{Introduction}, which is Theorem~\ref{Spanish March}).

On the other hand, suppose one insists on using a congruence relation. Theorem~\ref{Congruence} has a more restrictive condition that $p$ is inert over $K/\Q$ and $p\equiv 1 \pmod N$ (whereas Kolyvagin's congruence condition applies to all primes prime to the discriminant of $K/\Q$). However, a careful reading of \cite{Rubin-0}~Section~4 (which is another work that uses the congruence conditions) indicates that one does not need a congruence relation for every prime, and in fact, it is probably enough that it holds for inert primes. Adding the condition $p \equiv 1$ is probably fine, too.

Either way, we can claim that $\{ P_{\frac{a+\tau}c} \}$ satisfies enough relations to be an Euler system.

\end{subsection}

\end{section}

\begin{section}{Appendix}				\label{Texas}

Our description of the action of the Galois groups on $P_{\tau}$ in Section~\ref{Berlin} is not necessarily efficient for practical computation. In this section, we apply Koo, Shin, and Yoon's work (\cite{Koo-Shin-Yoon}~Section~4, \cite{Koo-Yoon}~Section~2) on the action of the Galois groups of class field extensions on the singular values of modular functions to obtain formulas which may be more readily useful in practice.

Following \cite{Koo-Shin-Yoon}, when $K$ is an imaginary quadratic field of discriminant $d_K$, let

\[ \theta=\left\{		
\begin{array}{lll}
\sqrt{d_K}/2	&	\text{if }	&d_K \equiv 0 \pmod 4	\\
(-1+\sqrt{d_K})/2	&	\text{if }	&d_K \equiv 1 \pmod 4.
\end{array}
\right.
\]
We let $\mathcal F_N$ be the extension of the function field $\Q(j(\tau))$ generated by the Fricke functions indexed by $r \in \frac 1N \Z^2/\Z^2$ (see \cite{Koo-Shin-Yoon}~Section~4). By the theory of modular functions, it is known that $\mathcal F_N$ is the set of all functions in $\C(X(N))$ whose Fourier coefficients are in $\Q(\zeta_N)$, $\mathcal F_1$ is simply $\Q(j(\tau))$, and

\[ \Gal(\mathcal F_N/\mathcal F_1) \cong \operatorname{GL}_2(\Z/N\Z) / \{ \pm I_2 \} \cong G_N \cdot \operatorname{SL}_2(\Z/N\Z) / \{ \pm I_2 \} \]
where

\[ G_N = \left\{ \left. \begin{bmatrix} 1&0 \\ 0&d \end{bmatrix} \; \right| \; d \in (\Z/N\Z)^* \right\}.	\]

We note $b(\tau), c(\tau) \in \mathcal F_N$ (\cite{Jeon-Kim-Lee}). 

The action of $\Gal(\mathcal F_N/\mathcal F_1)$ on $\mathcal F_N$ can be made explicit through $\operatorname{GL}_2(\Z/N\Z) / \{ \pm I_2 \} \cong G_N \cdot \operatorname{SL}_2(\Z/N\Z) / \{ \pm I_2 \}$ as follows (\cite{Koo-Yoon}~Section~2): $\begin{bmatrix} 1&0 \\ 0&d \end{bmatrix} \in G_N$ acts on $\mathcal F_N$ by

\[ \sum_{n>>-\infty} c_n q_{\tau}^{\frac nN} \mapsto  \sum_{n>>-\infty} c_n^{\sigma_d} q_{\tau}^{\frac nN}		\]
where $\sigma_d \in \Gal(\Q(\zeta_N)/\Q)$ is given by $\zeta_N^{\sigma_d}=\zeta_N^d$. 

And, $\gamma \in  \operatorname{SL}_2(\Z/N\Z) / \{ \pm I_2 \}$ acts on $h \in \mathcal F_N$ by

\[ h^{\gamma}(\tau)=h(\tilde \gamma \tau) \]
where $\tilde \gamma \in \operatorname{SL}_2(\Z)$ is a lifting of $\gamma$.

Following the notation of \cite{Koo-Shin-Yoon}, we let $H$ and $K_{(N)}$ be the Hilbert class field, and the ray class field modulo $N\OO_K$ of $K$ respectively. (Note that $K_{(N)}$ is $L_{\OO_K, N}$ in our earlier notation.) The action of $\Gal(K_{(N)}/H)$ is given explicitly through the singular values of $\mathcal F_N$ as follows (\cite{Koo-Shin-Yoon}~Section~2): Let $\operatorname{min}(\theta, \Q)=x^2+B_{\theta}x+C_{\theta} \in \Z[x]$, and let

\[ W_{N, \theta}=\left\{ \begin{bmatrix} t-B_{\theta}s & -C_{\theta}s \\ s&t \end{bmatrix} \in \operatorname{GL}_2 (\Z/N\Z) \; : \; t,s \in \Z/N\Z \right\}. \]

By \cite{Koo-Shin-Yoon}~Proposition~4.1, we can identify $K_{(N)}$ with the field

\[ K(h(\theta)\: | \: h \in \mathcal F_N \text{ is defined and finite at }\theta).		\]
Koo, Shin, and Yoon identify $\Gal(K_{(N)}/H)$ with the image of $W_{N, \theta}$ by the following surjection:

\begin{proposition}[\cite{Koo-Shin-Yoon}~Proposition~4.2] \label{4.2}
\begin{eqnarray*} W_{N,\theta} &\to& \Gal(K_{(N)}/H))		\\
\alpha&\mapsto&		\left( h(\theta) \mapsto h^{\alpha}(\theta) \right)
\end{eqnarray*}
\end{proposition}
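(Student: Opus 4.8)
The plan is to deduce Proposition~\ref{4.2} from Shimura's reciprocity law together with the identification of $K_{(N)}$ in \cite{Koo-Shin-Yoon}~Proposition~4.1, which we may take as given. Write $\operatorname{min}(\theta,\Q)=x^2+B_\theta x+C_\theta$, so that $\OO_K=\Z\theta+\Z$ and, reducing modulo $N$, $\OO_K/N\OO_K=(\Z/N\Z)\theta+(\Z/N\Z)$. The first step is a direct matrix computation: for $x=s\theta+t\in\OO_K/N\OO_K$, multiplication by $x$ is $\Z/N\Z$-linear, and using $\theta^2=-B_\theta\theta-C_\theta$ one finds $x\cdot\theta=(t-B_\theta s)\theta-C_\theta s$ and $x\cdot 1=s\theta+t$. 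Hence, in the ordered basis $(\theta,1)$, multiplication by $x$ is represented (after transposing, to match the convention of \cite{Koo-Shin-Yoon}) by the matrix $\begin{bmatrix} t-B_\theta s & -C_\theta s\\ s & t\end{bmatrix}$. This yields a ring homomorphism $g_{\theta,N}\colon \OO_K/N\OO_K\to M_2(\Z/N\Z)$ whose determinant is the norm map modulo $N$; in particular $x\in(\OO_K/N\OO_K)^{\times}$ if and only if $g_{\theta,N}(x)\in\operatorname{GL}_2(\Z/N\Z)$, and $g_{\theta,N}(x)=I$ forces $x=1$. Thus $g_{\theta,N}$ restricts to a group isomorphism $(\OO_K/N\OO_K)^{\times}\xrightarrow{\ \sim\ }W_{N,\theta}$.

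Next I would invoke the level-$N$ Shimura reciprocity law, i.e. Theorem~\ref{MT} applied to the CM point $\theta$ in the form adapted to $\mathcal F_N$ (see \cite{Shimura}): for $x\in(\OO_K\otimes\widehat\Z)^{\times}$, regarded as an idele of $K$ trivial at the archimedean place, and for any $h\in\mathcal F_N$ that is finite at $\theta$,
\[ h(\theta)^{[x,K]}=h^{\,g_{\theta,N}(x)^{-1}}(\theta), \]
where $\operatorname{GL}_2(\Z/N\Z)/\{\pm I_2\}$ acts on $\mathcal F_N$ as recalled just before Proposition~\ref{4.2}. Because that action of $\operatorname{GL}_2(\widehat\Z)$ on $\mathcal F_N$ factors through $\operatorname{GL}_2(\Z/N\Z)$, the restriction of $[x,K]$ to $K_{(N)}$ depends on $x$ only through $g_{\theta,N}(x\bmod N)\in W_{N,\theta}$; combined with Proposition~4.1 of \cite{Koo-Shin-Yoon}, which says $K_{(N)}=K(h(\theta):h\in\mathcal F_N\text{ finite at }\theta)$, this shows that $\alpha\mapsto(h(\theta)\mapsto h^{\alpha}(\theta))$ is a well-defined group homomorphism $W_{N,\theta}\to\operatorname{Aut}(K_{(N)}/K)$. (It agrees with $g_{\theta,N}^{-1}(\alpha)\mapsto[g_{\theta,N}^{-1}(\alpha)^{-1},K]$; the inversion inside the group is harmless.)

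It remains to identify the image as $\Gal(K_{(N)}/H)$ and not all of $\Gal(K_{(N)}/K)$. Since $K_{(N)}=L_{\OO_K,N}$ is the ray class field of conductor $N\OO_K$ and $H=L_{\OO_K}$ is the Hilbert class field, one has the standard exact sequence
\[ 1\to(\OO_K/N\OO_K)^{\times}/\operatorname{im}(\OO_K^{\times})\to\Gal(K_{(N)}/K)\to\Gal(H/K)\to1, \]
in which the left-hand subgroup is exactly $\Gal(K_{(N)}/H)$, and this subgroup is precisely the image of $(\OO_K\otimes\widehat\Z)^{\times}$ under the Artin map, as one reads off from the idelic description of $\Gal(L_{\OO,N}/K)$ recalled in Section~\ref{Berlin}. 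Therefore the composite $(\OO_K/N\OO_K)^{\times}\hookrightarrow(\OO_K\otimes\widehat\Z)^{\times}\xrightarrow{[\,\cdot\,,K]}\Gal(K_{(N)}/K)$ is surjective onto $\Gal(K_{(N)}/H)$, and by the reciprocity formula above this composite, transported along the isomorphism $g_{\theta,N}$, is precisely the map of Proposition~\ref{4.2} (up to the harmless inversion). Hence that map lands in $\Gal(K_{(N)}/H)$ and is surjective onto it; its kernel is the image of $\OO_K^{\times}/\{\pm1\}$, which is why one obtains only a surjection.

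The main obstacle is the bookkeeping in Shimura's reciprocity law: making the inverse-and-transpose conventions for $g_{\theta,N}(x)$ consistent with the normalization of the $\operatorname{GL}_2(\Z/N\Z)$-action on $\mathcal F_N$ used in \cite{Koo-Shin-Yoon} and \cite{Koo-Yoon}, and correctly disposing of the archimedean component and of the analytic uniformization hidden in Theorem~\ref{MT}. Once the reciprocity formula is written in the precise normalization matching \cite{Koo-Shin-Yoon}, both the matrix computation of $g_{\theta,N}$ and the class-field-theoretic identification of $\Gal(K_{(N)}/H)$ are routine.
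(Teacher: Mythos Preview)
The paper does not give its own proof of this proposition: it is quoted verbatim as \cite{Koo-Shin-Yoon}~Proposition~4.2 in the Appendix and used as a black box, with no argument supplied. So there is no in-paper proof to compare your proposal against.

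That said, your argument is the standard one and is essentially how the result is proved in \cite{Koo-Shin-Yoon} (and ultimately in \cite{Shimura}): realize $W_{N,\theta}$ as the regular representation of $(\OO_K/N\OO_K)^\times$ on the basis $(\theta,1)$, apply Shimura reciprocity to translate the Artin symbol $[x,K]$ on singular values $h(\theta)$ into the $\operatorname{GL}_2(\Z/N\Z)$-action $h\mapsto h^{g_{\theta,N}(x)^{-1}}$, and then use the idelic description of the ray class field to see that the image is exactly $\Gal(K_{(N)}/H)$ with kernel $\operatorname{im}(\OO_K^\times)$. Your matrix computation and the class-field-theoretic exact sequence are both correct, and you are right that the only delicate point is matching the transpose/inverse conventions between Shimura's normalization and that of \cite{Koo-Shin-Yoon}; once those are aligned the rest is routine.
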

If $d_K \leq -7$, then its kernel is $\{ \pm I_2 \}$. By applying this to $b(\tau)$ and $c(\tau)$, we can compute the action of $\Gal(K(N)/H)$ on $P_\tau$. (The computational advantage is that we only need to know  $W_{N, \theta}$.)

The action of $\Gal(H/K)$ (and by extension, the action of $\Gal(K_{(N)}/K)$) is given as follows (\cite{Koo-Shin-Yoon}~Proposition~4.3). Let $C(d_K)$ the form class group of discriminant $d_K$, which is the set of primitive positive definite quadratic forms $aX^2+bXY+cY^2 \in \Z[X,Y]$ under a proper equivalence relation. (See \cite{Koo-Shin-Yoon}~p.351 for an explicit classification of the elements of $C(d_K)$.) We note that $C(d_K)\cong \Gal(H/K)$ (\cite{Cox}~Theorem~7.7).

For a reduced quadratic form $Q=aX^2+bXY+cY^2$ of $\operatorname{disc}Q=d_K$, we set

\[ \theta_Q=(-b+\sqrt{d_K})/2a, \]
and set $\beta_Q=(\beta_p)_p \in \prod_p \operatorname{GL}_2(\Zp)$ (where $p$ runs over all primes) for $\beta_p$ given in \cite{Koo-Shin-Yoon}~p.351-352.

\begin{proposition}[\cite{Koo-Shin-Yoon}~Proposition~4.3] 	\label{4.3} Assume $d_K \leq -7$ and $N\geq 1$. (Note that by this assumption, $W_{N,\theta}/\{ \pm I_2\}  \cong \Gal(K_{(N)}/H)$.) Then, we have a bijective map

\begin{eqnarray*}
W_{N,\theta}/\{ \pm I_2\} \times C(d_K)		&\to & \Gal(K_{(N)}/K) \\
(\alpha, Q)	&\mapsto	& \left( h(\theta) \mapsto h^{\alpha \beta_Q}(\theta_Q) \right)
\end{eqnarray*}
\end{proposition}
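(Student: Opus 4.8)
\textit{Proof idea.} The plan is to identify the asserted map with the Artin reciprocity isomorphism for $K_{(N)}/K$, transported through an explicit idele--matrix dictionary, and then to conclude by a cardinality count. Start from the exact sequence
\[ 1 \longrightarrow \Gal(K_{(N)}/H) \longrightarrow \Gal(K_{(N)}/K) \longrightarrow \Gal(H/K) \longrightarrow 1, \]
together with $\Gal(H/K)\cong C(d_K)$ and, by Proposition~\ref{4.2} with $d_K\le -7$ (so that the kernel there is exactly $\{\pm I_2\}$), $\Gal(K_{(N)}/H)\cong W_{N,\theta}/\{\pm I_2\}$. Thus the source $W_{N,\theta}/\{\pm I_2\}\times C(d_K)$ and the target $\Gal(K_{(N)}/K)$ are finite sets of the same size, and it is enough to show that $(\alpha,Q)\mapsto\bigl(h(\theta)\mapsto h^{\alpha\beta_Q}(\theta_Q)\bigr)$ is (i) well defined, independent of the choice of representatives $Q$ of its class in $C(d_K)$ and $\alpha$ of its coset in $W_{N,\theta}/\{\pm I_2\}$, and landing inside $\Gal(K_{(N)}/K)$, and (ii) surjective.

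For (i), the operation $h\mapsto h^{\alpha\beta_Q}$ is, by the explicit description of the $\mathrm{GL}_2$-action recalled before Proposition~\ref{4.2}, a genuine field automorphism of $\mathcal F_N$; the real point is that \emph{post}-composing it with evaluation at $\theta_Q$ produces an automorphism of $K_{(N)}$ fixing $K$. Here one invokes the Main Theorem of Complex Multiplication (Theorem~\ref{MT}) in the guise of Shimura's reciprocity law: $\theta_Q\in K\cap\HH$ and $[\theta_Q,1]$ is a proper $\OO_K$-ideal, so by Theorem~\ref{Cox} the singular values $h(\theta_Q)$ ($h\in\mathcal F_N$ finite at $\theta_Q$) generate $K_{(N)}$, and one produces an idele $s=s(\alpha,Q)\in\mathbb A_K^\times$ whose finite part, written out through the $\Q_p$-bases of the $K_p$ coming from $(\theta,1)$ and $(\theta_Q,1)$, records both the matrix $\alpha\beta_Q$ and the passage from $\theta$ to $\theta_Q$, in such a way that $[s,K]|_{K_{(N)}}$ carries $h(\theta)$ to $h^{\alpha\beta_Q}(\theta_Q)$. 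The matrices $\beta_Q$ are chosen in \cite{Koo-Shin-Yoon} precisely so that their local components are the matrices, in the basis $(\theta,1)$, of local generators of an ideal $\mathfrak a_Q$ representing the class of $Q$; the $\{\pm I_2\}$ ambiguity is harmless because $-1\in K^\times$ acts trivially, and changing $Q$ within its class, or $\alpha$ within its coset, alters $s$ only by an idele lying in the subgroup of $\mathbb A_K^\times$ that fixes $K_{(N)}$ under the Artin map.

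For (ii), Proposition~\ref{4.2} already gives that $\{h(\theta)\mapsto h^\alpha(\theta):\alpha\in W_{N,\theta}/\{\pm I_2\}\}$ is all of $\Gal(K_{(N)}/H)$, while as $Q$ ranges over $C(d_K)$ the ideles $s(\mathrm{id},Q)$ range over a full set of representatives for $\Gal(K_{(N)}/K)/\Gal(K_{(N)}/H)\cong\Gal(H/K)$ — this is the level-$N$ refinement of the classical fact that $j(\theta)^{\sigma}=j(\theta_Q)$, where $\sigma$ is the Artin image of the ideal class of $Q$. Composing any such coset representative with the entire fibre $\Gal(K_{(N)}/H)$ therefore reaches every element of $\Gal(K_{(N)}/K)$, so the map is onto, hence (equal finite cardinalities) bijective.

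I expect the genuine obstacle to be the explicit idele--matrix bookkeeping underlying (i) and (ii): verifying prime by prime that the matrices $\beta_Q$ of \cite{Koo-Shin-Yoon}, p.\,351--352, are the images under the connecting map $g_\theta$ of ideles representing the ideal classes, and pinning down the precise normalization of Shimura reciprocity (left versus right action, the inverse, a possible transpose, and the $\{\pm I_2\}$ ambiguity together with the $d_K\le -7$ hypothesis that ensure the relevant singular values are finite and that Proposition~\ref{4.2} is an isomorphism). Once that dictionary is nailed down, the rest is the exact sequence above together with the cardinality count.
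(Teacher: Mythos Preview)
The paper does not give a proof of this proposition: it is quoted verbatim as \cite{Koo-Shin-Yoon}~Proposition~4.3 and used as a black box in the Appendix, with only the remark following it explaining how the action of $\beta_Q$ is to be interpreted. There is therefore no ``paper's own proof'' to compare your sketch against.

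That said, your outline is the expected one and matches the strategy of \cite{Koo-Shin-Yoon}: use the exact sequence $1\to\Gal(K_{(N)}/H)\to\Gal(K_{(N)}/K)\to\Gal(H/K)\to 1$, identify the kernel with $W_{N,\theta}/\{\pm I_2\}$ via Proposition~\ref{4.2} (this is where $d_K\le -7$ enters, forcing $\OO_K^\times=\{\pm1\}$), identify the quotient with $C(d_K)$, and then invoke Shimura reciprocity to see that $(\alpha,Q)\mapsto(h(\theta)\mapsto h^{\alpha\beta_Q}(\theta_Q))$ realises a set-theoretic section followed by the kernel action. Your caveat about the bookkeeping is apt: the content of \cite{Koo-Shin-Yoon} is precisely the prime-by-prime verification that the $\beta_p$ they write down are the matrix avatars, in the basis $(\theta,1)$, of local generators of an ideal in the class of $Q$, together with the correct normalisation of the reciprocity map. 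Your sketch correctly isolates this as the only substantive step.
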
 
As \cite{Koo-Shin-Yoon} notes, there is $\beta \in \operatorname{GL}_2^+(\Q) \cap \operatorname{M}_2(\Z)$ with $\beta \equiv \beta_p \pmod{N\Zp}$ for all primes $p$ dividing $N$, and the action of $\beta_Q$ is understood to be that of $\beta$.

The bijection in Proposition~\ref{4.3} is not a group isomorphism (but the map in Proposition~\ref{4.2} is a group homomorphism), but from the computational perspective, it should not matter.

This bijection gives a computational description of the action of $\Gal(K_{(N)}/K)$ on our functions $b(\tau), c(\tau)$ when $\tau=\theta$. \cite{Koo-Shin-Yoon} does not explicitly say much when $\tau$ is not $\theta$, but its authors told us that their result can be generalized to any $\tau \in K$. In such a case, the action will be that of $\Gal(L_{\OO_n, N}/K)$ where $\OO_n$ is the order of $K$ acting on $\Lambda_\tau$, which is compatible with our work.

\end{section}

%%%%%%%%%%%%%%%%%%%%%%%%5

\end{document}